\documentclass[11pt]{amsart}


\usepackage{amsmath, amscd, amssymb, amsthm}
\usepackage{latexsym, amssymb, amsmath, amscd, amsfonts}
\usepackage{url}
\usepackage{tikz}
\usepackage[T1]{fontenc}
\usepackage{multirow}
\usepackage{comment}
\usepackage{makecell}

\usepackage{colonequals}
\usepackage{calc}
\usepackage{tikz}
\usetikzlibrary{decorations.markings}
\tikzstyle{vertex}=[circle, draw, inner sep=0pt, minimum size=6pt]
\newcommand{\vertex}{\node[vertex]}

\setcounter{MaxMatrixCols}{10}
\numberwithin{equation}{section}
\newtheorem{theorem}{Theorem}[section]
\newtheorem{corollary}[theorem]{Corollary}
\newtheorem{definition}[theorem]{Definition}
\newtheorem{example}[theorem]{Example}
\newtheorem{lemma}[theorem]{Lemma}
\newtheorem{proposition}[theorem]{Proposition}
\newtheorem{remark}[theorem]{Remark}
\newtheorem{notation}[theorem]{Notation}

\setlength{\textheight}{8.6in} \setlength{\textwidth}{35pc}
\setlength{\topmargin}{-.1in} \setlength{\oddsidemargin}{.25in}
\setlength{\evensidemargin}{\oddsidemargin}
\setcounter{MaxMatrixCols}{11}
\setlength{\tabcolsep}{11pt}


\begin{document}

\title
{Graph Laplacians with Higher Accuracy}

\author{Mary Yoon}
\email{ymary1201@gmail.com}
\address{Department of Mathematics, Korea University, 
145 Anam-ro Seongbuk-gu, Seoul 02841, South Korea}

\begin{abstract}
Motivated by discrete Laplacian differential operators with various accuracy orders in numerical analysis, we introduce new matrices attached to a simple graph that can be considered graph Laplacians with higher accuracy. 
In particular, we show that the number of graphs having cospectral mates with these matrices is significantly less than the ones with other known matrices. 
We also investigate their spectral properties and explicitly compute their eigenvalues and eigenvectors for some graphs. 
Along the line, we also prove the existence of a weighted signed graph with given Laplacian eigenvalues.
\end{abstract}

\subjclass[2020]{05C50, 05C22, 15A18, 15B99}
\keywords{Graph theory, $m$-Laplacian matrix, $m$-Laplacian eigenvalue, Weighted signed graph, Laplacian matrix}

\maketitle


\tableofcontents

\section{Introduction} \label{sec-introduction}

Let $G$ be a simple graph with $n$ vertices $v_1, ..., v_n$. 
Recall that the {adjacency matrix} $A_G$ of $G$ is 
the $n \times n$ matrix whose $(i,j)$ entry is $1$ 
if $v_i$ and $ v_j$ are adjacent; and $0$ otherwise. 
The {degree matrix} $D_G$ of $G$ is the $n \times n$ diagonal matrix 
whose $(i,i)$ entry is the degree of $v_i$, i.e., the number of 
edges incident with $v_i$.  Then, the {Laplacian matrix}, 
or {graph Laplacian} of $G$ is  
\[
L_G = D_G - A_G.
\] 
In this paper, we introduce a more accurate version of the Laplacian matrix of a simple graph $G$, called the \underline{$m$-Laplacian} of $G$ and denoted by $L^{(m)}_G$ 
for $1 \leq m < n$. We then investigate its spectral properties.

Our motivation is from the finite difference method in numerical analysis.  
We will show that when $G$ is a cycle of length $n$, its $m$-Laplacian can be considered an approximation of the one-dimensional Laplacian differential operator with the order of accuracy $2m$, 
while the ordinary Laplacian is an approximation with the order of accuracy $2$, thus $m=1$. 
Indeed, our $1$-Laplacian of any simple graph $G$ is precisely the ordinary Laplacian of $G$
\[
L^{(1)}_G = L_G.
\]

\medskip

Spectral graph theory examines the properties of graphs using  
the eigenvalues of matrices associated with them. 
Thus, knowing whether these eigenvalues can uniquely characterize graphs is important.  
Recently, the spectral properties of so-called signless Laplacians of graphs have received considerable attention, partly due to the relatively small number of graphs sharing the same spectrum (\cite{C Signless}). 
See, for example, \cite{Belardo signless, Chen signless, Cui signless, 
Gho signless, Lin signless, Oboudi signless, Zhao signless}.
Table \ref{tab-cospectral number} shows that the number of graphs with the same eigenvalues of our $2$-Laplacian is significantly lower than that of the signless Laplacian and other matrices.
Even when the number of vertices is $9$ or less, there is no cospectral pair with respect to $3$-Laplacian. 
That means all graphs with $9$ or less vertices is determined by its $3$-Laplacian spectrum. 
Thus, we strongly believe that the $m$-Laplacians can be potentially more powerful tools than the ones currently studied in algebraic graph theory.

\begin{table} 
\begin{center}
\begin{tabular}{|c||c|c|c|c|c|c|}
\hline
\hspace*{1.5mm}$|V|$ \hspace*{1.5mm}& \hspace*{1.5mm} $G$ \hspace*{1.5mm} & \hspace*{1.5mm} $A$ \hspace*{1.5mm}& \hspace*{1.5mm} $L$ \hspace*{1.5mm} & \hspace*{1.5mm} $|L|$\hspace*{1.5mm} & \hspace*{1.45mm} $L^{(2)}$ \hspace*{1.45mm}  & \hspace*{1.45mm} $L^{(3)}$ \hspace*{1.45mm} \\
\hline
1  &  1 &    0  &   0   &  0 &   0   & 0      \\
2  &  2 &    0  &   0   &  0 &   0   & 0      \\
3  &  4 &    0  &   0   &  0  &  0   & 0      \\
4  &  11 &   0   &  0    & 2  &  0   & 0       \\
5  &  34 &   2   &  0    &  4  &   0     & 0   \\
6  &  156 &  10    &   4   &  16  &   0    & 0    \\
7  &  1044 &  110    &   130   & 102 &   2  & 0        \\
8  &  12346 &  1722    &   1767   & 1201  & 2  & 0         \\
9  &  274668 &  51039    &   42595   &  19001 & 4  & 0        \\
\hline
\end{tabular}
\end{center}
\caption{The number of cospectral mates with respect to various graph matrices: 
The number of vertices is listed in the first column $|V|$. 
The numbers of non-isomorphic simple graphs with a given number of vertices are listed in the second column $G$. The number of graphs with cospectral mates with respect to the adjacent, Laplacian, signless Laplacian, $2$-Laplacian and the $3$-Laplacian matrices are listed respectively in columns $A$, $L$, $|L|$, $L^{(2)}$ and $L^{(3)}$.} 
\label{tab-cospectral number}
\end{table}

\medskip

The $m$-Laplacian of a simple graph $G$ can be defined as the Laplacian of an edge-weighted graph $G_m$ derived from $G$. 
Here, edge weights on $G_m$ are not necessarily the same sign, and thus, $G_m$ is a so-called weighted \textit{signed} graph. 
In this paper, we call it simply a \underline{weighted graph}.
Recently, many researchers tend to pay more attention to the Laplacian spectra of weighted graphs and their applications 
(for example,  
\cite{Chen signed, Kun signed, Ni signed, Song signed, Zelazo signed}), and our results can be considered extensions and supplements to the Laplacian spectral theory of weighted graphs. 
In particular, we will prove the existence of a weighted graph with some Laplacian spectrum. 

\medskip

The remainder of this paper is organized as follows.
In Section \ref{sec-discrete laplacian operator}, we define a discrete version of the Laplacian differential operator with high accuracy.
In Section \ref{sec-$m$-Laplacian of simple graphs}, we define the $m$-Laplacian of a simple graph and prove its basic properties.
In Section \ref{sec-$m$-Laplacian spectrum}, we examine the $m$-Laplacian spectrum of cycles, stars, and complete graphs. 
In Section \ref{sec-existence of a signed graph}, we prove the existence of a weighted graph with a given Laplacian spectrum.
In Section \ref{sec-$2$-Laplacian of simple graphs}, we focus on the $2$-Laplacian and investigate its properties by focusing on the similarities and differences between $2$-Laplacian and the graph Laplacian. We also compute the $2$-Laplacian spectrum of circulant graphs.

\bigskip

\section{Discrete Laplacian operator of higher accuracy order} 
\label{sec-discrete laplacian operator}

In this section, we find a discrete version 
of the Laplacian differential operator with arbitrary accuracy order using the finite difference method.

\subsection{Discrete Laplacian operator}

Let us consider a function $u(x)$ on the domain 
$\Omega = \{ x \in \mathbb{R} : 0 < x < L \}$.
We first discretize the domain with
\[
x_i = \left( i - \frac{1}{2}  \right) h  \quad \text{for  $1\leq i \leq n$}
\] 
where $n$ is the number of grid points and $h=L/n$. 
For each integer $1\leq i \leq n$, we let 
\[
u_{i}=u(x_i), \quad  u'_i = u'(x_i), \quad u''_i = u''(x_i)
\]
and so on. Here, the indices $i$ will be regarded as elements 
of $\mathbb{Z}_n$, thus 
\[
u_{n+i} = u_i \quad \text{for all $i \in \mathbb{Z}$}.
\]
Then for any integer $i$ and $k$, by Taylor expansion, 
\begin{equation*}
    u_{i+k}=u_{i}+(kh)u'_{i}+\frac{(kh)^2}{2!}u''_{i}+\frac{(kh)^3}{3!}u^{(3)}_{i}+\cdots
\end{equation*}
hence we have 
\begin{equation}\label{eqn-powersum-three}
u_{i+k} - 2 u_{i} + u_{i-k}=(kh)^2 u''_{i}+ 2\times\left\{ 
\frac{(kh)^4}{4!}u^{(4)}_{i}+\frac{(kh)^6}{6!}u^{(6)}_{i}+\cdots \right\}.
\end{equation}

\medskip

Our next task is to find $a_{k,m}$ satisfying 
\begin{equation} \label{goal eqn}
    \sum_{k=1}^{m}{a_{k,m}(u_{i+k}-2u_{i}+u_{i-k})}=h^2 u''_{i}+O(h^{2m+2}).
\end{equation} 
Expanding the left-hand side of equation \eqref{goal eqn}  
using \eqref{eqn-powersum-three}, we obtain
\begin{align*}
&   \sum_{k=1}^{m}{a_{k,m}(u_{i+k}-2u_{i}+u_{i-k})}\\
& = \sum_{k=1}^{m} a_{k,m} \left\{ (kh)^2 u''_{i}+ 2\times \left( 
\frac{(kh)^4}{4!}u^{(4)}_{i}+\frac{(kh)^6}{6!}u^{(6)}_{i}+\cdots \right)\right\} \\  
   &= \ h^2(a_{1,m}+2^2a_{2,m}+3^2a_{3,m}+\cdots+m^2a_{m,m})u''_{i} \\
    & \qquad +2 \times \frac{h^4}{4!}(a_{1,m}+2^4a_{2,m}+3^4a_{3,m}+\cdots+m^4a_{m,m})u^{(4)}_{i}  
    +\cdots \\
    & \qquad +2 \times \frac{h^{2m}}{(2m)!}(a_{1,m}+2^{2m}a_{2,m}+3^{2m}a_{3,m}+\cdots+m^{2m}a_{m,m})u^{(2m)}_{i}+O(h^{2m+2}).
\end{align*}
Therefore, we can find $a_{k,m}$ satisfying 
equation \eqref{goal eqn} by solving the following system of linear equations:
\begin{equation} \label{Laplacian equ}
\begin{bmatrix}
1 & 2^2 & 3^2 & \cdots  & m^2 \\
1 & 2^4 & 3^4 & \cdots  & m^4 \\
1 & 2^6 & 3^6 & \cdots  & m^6 \\
\vdots & \vdots & \vdots & \ddots & \vdots \\
1 & 2^{2m} & 3^{2m} & \cdots  & m^{2m}
\end{bmatrix}
\begin{bmatrix} a_{1,m} \\  a_{2,m} \\ a_{3,m} \\ \vdots \\ a_{m,m} \end{bmatrix} =
\begin{bmatrix} 1 \\  0 \\ 0 \\ \vdots \\ 0 \end{bmatrix}.
\end{equation}
For the rest of the paper, we let
$a_{k,m}$ denote 
the solution to equation \eqref{Laplacian equ}.
Then we have
\[
u''_{i}=\frac{1}{h^2}\sum_{k=1}^{m}{a_{k,m}(u_{i+k}-2u_{i}+u_{i-k})} + O(h^{2m}).    
\]

\medskip

Based on the above discussion, we now define a discrete version of the one-dimensional Laplacian differential operator of high accuracy order. 

\begin{definition} \label{higher ord Laplacian}
For each $m \in \mathbb{Z}_{>0}$, 
the $(2m)$th accuracy order discrete Laplacian operator 
$\Delta^{(m)}$ is 
\begin{equation}\label{def of operator}
 (\Delta^{(m)} u) (x_i) 
 = \frac{1}{h^2}\sum_{k=1}^{m}{a_{k,m}(u (x_{i+k})-2 u(x_{i}) +u (x_{i-k}))}.
\end{equation}
\end{definition}

Using $\Delta^{(m)}$, we can obtain a more accurate numerical solution to, for example, Poisson's equations and heat equations.

\subsection{Explicit values of $a_{k,m}$} 

The expressions of discrete Laplacian operators with some small accuracy order can be found in any numerical analysis reference. 
However, we cannot find its expression with arbitrary accuracy order, and thus we provide an explicit computation of $a_{k,m}$ for the sake of completeness.

\begin{theorem} \label{laplacin expl}
Given $m \in \mathbb{Z}_{>0}$, for each $1\leq k \leq m$, 
\[ 
a_{k,m}=(-1)^{k+1}\frac{2\binom{2m}{m-k}}{k^2\binom{2m}{m}}. 
\]
\end{theorem}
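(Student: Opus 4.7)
The plan is to verify that the proposed formula solves the linear system \eqref{Laplacian equ}; since the coefficient matrix is Vandermonde in the variables $k^2$ (with $k = 1, \dots, m$), it is invertible and the solution is unique, so verification suffices.

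Denote the proposed value by $\alpha_{k,m} = (-1)^{k+1} \frac{2\binom{2m}{m-k}}{k^2 \binom{2m}{m}}$. Substituting into the $\ell$-th equation of \eqref{Laplacian equ}, I need to establish
\[
T(\ell) := \sum_{k=1}^{m} (-1)^{k+1} \binom{2m}{m-k}\, k^{2\ell-2} \;=\; \tfrac{1}{2}\binom{2m}{m}\, \delta_{\ell,1} \qquad (1 \le \ell \le m).
\]
The first key step is to symmetrize: since $k^{2\ell-2}$ is even in $k$ and $\binom{2m}{m-k} = \binom{2m}{m+k}$, the negative-$k$ tail matches the positive-$k$ tail, giving
\[
2 T(\ell) \;=\; \sum_{\substack{k = -m \\ k \ne 0}}^{m} (-1)^{k+1} \binom{2m}{m-k}\, k^{2\ell-2}.
\]
Adding back the $k = 0$ term (which contributes $-\binom{2m}{m} \delta_{\ell,1}$) and then substituting $j = m-k$ produces
\[
2 T(\ell) - \binom{2m}{m}\delta_{\ell,1} \;=\; -(-1)^m \sum_{j=0}^{2m}(-1)^{j}\binom{2m}{j}(m-j)^{2\ell-2}.
\]

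The second key step is the classical identity: the $N$-fold alternating binomial sum annihilates every polynomial of degree less than $N$, i.e.\ $\sum_{j=0}^{N}(-1)^j\binom{N}{j} P(j) = 0$ whenever $\deg P < N$. Here $P(j) = (m-j)^{2\ell-2}$ has degree $2\ell-2$, which is at most $2m-2 < 2m$ for $\ell \le m$; hence the sum on the right vanishes. This immediately gives $T(\ell) = \tfrac{1}{2}\binom{2m}{m}\delta_{\ell,1}$, confirming that $\alpha_{k,m}$ satisfies the system and therefore equals $a_{k,m}$ by uniqueness.

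There is no real obstacle: the entire argument is a short algebraic manipulation, and the only nontrivial input is the standard finite-difference fact that $\Delta^{2m}$ kills polynomials of degree below $2m$. If a more conceptual derivation were desired, one could instead recognize \eqref{Laplacian equ} as the Taylor matching problem for the formal operator identity $(hD)^2 = (2\sinh^{-1}(\delta/2))^2$ truncated at order $2m$ and read off the coefficients from the known expansion of $(\sinh^{-1})^2$, but the direct verification above is cleaner and self-contained.
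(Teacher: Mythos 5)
Your proof is correct and follows essentially the same route as the paper: both verify that the closed form satisfies the system \eqref{Laplacian equ} by reducing each equation to the fact that the $2m$-fold alternating binomial sum annihilates polynomials of degree less than $2m$ (which the paper obtains by repeatedly differentiating $(x-1)^{2m}$ and setting $x=1$). Your write-up is slightly more careful in making the symmetrization $\binom{2m}{m-k}=\binom{2m}{m+k}$ and the re-indexing explicit, in treating the $\ell=1$ case uniformly with the rest, and in noting that uniqueness follows from the Vandermonde structure of the coefficient matrix --- a point the paper leaves implicit.
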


\begin{proof} 
We only need to verify that the above $a_{k,m}$ satisfy 
the system of linear equations given in \eqref{Laplacian equ}. 
It is straightforward to verify the first equation in the system:
\[
\displaystyle\sum_{k=1}^{m}{k^2a_{k,m}}=\frac{2}{\binom{2m}{m}}\displaystyle\sum_{k=1}^{m}{(-1)^{k+1}\binom{2m}{m-k}}=1.
\]
To verify the rest of them, let us successively differentiate 
\[ 
(x-1)^{2m}=\sum_{k=0}^{2m}{(-1)^{k} x^{k}\binom{2m}{k}},
\]
and then substitute $1$ for $x$ to obtain 
\[ 
\sum_{k=0}^{2m}{(-1)^{k+1} k^{\ell}\binom{2m}{k}}=0 
\]
for all $\ell \in \mathbb{Z}_{>0}$. 
Using this, we obtain 
\[ 
\displaystyle\sum_{k=1}^{m}{k^{2j}a_{k,m}}=\frac{2}{\binom{2m}{m}}\displaystyle\sum_{k=1}^{m}{(-1)^{k+1}k^{2j-2} \binom{2m}{m-k}}=0
\]
for $j \geq 2$.
\end{proof}

\begin{table} 
\begin{center}
\begin{tabular}{|c||c|c|c|c|}
\hline
\diaghead(2,-1){\hskip1cm}{$m$}{$k$} & \hspace*{1.7mm} $1$ \hspace*{1.7mm} & 
\hspace*{1.5mm}$2$ \hspace*{1.5mm} & \hspace*{1.5mm} $3$ \hspace*{1.5mm} & 
\hspace*{1.3mm} $4$ \hspace*{1.3mm} \\
\hline \hline
$1$ & $1$ &  & & \\
\hline 
$2$ & $ $4/3$ $ & $-1/12$ & & \\
\hline
$3$ & $3/2$ & $-3/20$ & $1/90$ & \\
\hline
$4$ & $8/5$& $-1/5$ & $8/315$ & $-1/560$\\
\hline 
\end{tabular}
\end{center}
\caption{$a_{k,m}$ for $m=1,2,3,4$.}
\end{table}

\section{$m$-Laplacian of simple graphs} \label{sec-$m$-Laplacian of simple graphs}

In this section, we generalize graph Laplacian using the high accuracy discrete Laplacian differential operator $\Delta^{(m)}$ given in Definition \ref{higher ord Laplacian}.  
From now on, we will treat the operator $\Delta^{(m)}$ as a matrix one can multiply to a column vector 
\[
\mathbf{u} = \begin{bmatrix} u_1 \\ \vdots \\ u_n \end{bmatrix}
=\begin{bmatrix} u(x_1) \\ \vdots \\ u(x_n) \end{bmatrix}
\]
to denote
\[
\Delta^{(m)} \mathbf{u}
= \begin{bmatrix}
(\Delta^{(m)} u ) (x_1) \\ \vdots \\ (\Delta^{(m)} u ) (x_n) \end{bmatrix}.
\]

\subsection{$m$-Laplacians of cycle graphs} 

Let us first focus on $m=1$. Since $a_{1,1}=1$, Definition \ref{higher ord Laplacian} becomes 
 \[
 \Delta^{(1)} \begin{bmatrix}  \vdots \\  u_i  \\ \vdots \end{bmatrix}
 = \frac{1}{h^2} 
 \begin{bmatrix}  \vdots \\ (u_{i+1} + u_{i-1} - 2 u_i)  \\ \vdots \end{bmatrix}.
 \]

On the other hand, note that vertex $v_i$ in the cycle graph $C_n$ is adjacent to $v_{i-1}$ and $v_{i+1}$ for all $i$. 
Therefore, the $i$th row of its adjacency matrix $A_{C_n}$ 
has $1$ at $(i, i-1)$ and $(i, i+1)$; and $0$ otherwise, thus the $i$th row of column vector $A_{C_n} \mathbf{u}$ is
\[
u_{i-1} + u_{i+1}.
\]
Moreover, since $\deg(v_i)=2$ for all $i$,  
the $i$th row of column vector $(D_{C_n} - A_{C_n}) \mathbf{u}$ is
\[
2 u_i - u_{i-1} - u_{i+1}.
\]
This shows that the ordinary graph Laplacian $L_{C_n} = D_{C_n} - A_{C_n}$ 
of $C_n$ gives
\[
L_{C_n} \begin{bmatrix}  \vdots \\  u_i  \\ \vdots \end{bmatrix}
=\begin{bmatrix}  \vdots \\ 2 u_i - u_{i+1}  - u_{i-1}  \\ \vdots \end{bmatrix}
\]
and therefore, we have
\[
L_{C_n} = - h^2  \Delta^{(1)}.
\]

\medskip

Based on  this observation, we want to define the operator $- h^2 \Delta^{(m)}$ as 
the graph Laplacian of the cycle graph $C_n$ with the $(2m)$th order accuracy. 
In order to handle the terms 
\[
u_{i+k} + u_{i-k} 
\]
in equation \eqref{def of operator} using the adjacency of vertices, we introduce the following notation.
\begin{notation} \label{def of A_k}
Let  $A_{k}$ be 
the adjacency matrix of a simple graph with $n$ vertices 
such that $v_i$ and $v_j$ are adjacent if and only if 
\[
j \equiv i \pm k \quad \text{ (mod $n$)}.
\]
\end{notation}

Then the expression in Definition \ref{higher ord Laplacian} can be rewritten in matrix form as follows:
\begin{align} \label{cycle Lap eqn}
\Delta^{(m)}\mathbf{u}=\frac{1}{h^2} \sum_{k=1}^{m}a_{k,m}(A_{k} - 2 I_n) \mathbf{u},
\end{align}
and then using $- h^2 \Delta^{(m)}$ we have
\begin{definition} \label{cycle m-lap}
For each $1 \leq m < n$,
the \underline{$m$-Laplacian} of the cycle graph $C_n$ is
\[
\begin{split}
L^{(m)}_{C_n} \; \colonequals \; & 2(a_{1,m} + a_{2,m} + \cdots + a_{m,m}) I_n \\
\qquad & - (a_{1,m} A_1 + a_{2,m} A_2 + \cdots + a_{m,m} A_m).
\end{split}
\]
\end{definition}

\medskip


\subsection{$m$-Laplacians of simple graphs} 

In order to extend the above definition of the $m$-Laplacian to general simple graphs $G$,
let us generalize $A_{k}$ in Notation \ref{def of A_k} first.

\begin{definition}
The \underline{length $k$ open path matrix} of $G$ for each $1\leq k < n$, denoted by $P_{G,k}$, 
is the $n \times n$ matrix whose $(i,j)$ entry is equal to the number of open paths of length $k$ from $v_i$ to $v_j$ in $G$.  
\end{definition}

Note that the main diagonal elements of $P_{G,k}$ are all zero. 
Here, we emphasize that a path in $G$, represented as a sequence of vertices, 
contains all distinct vertices except possibly the initial and final ones. 
An open path in $G$ is a path in $G$ that starts and ends on different vertices.
We observe that for the cycle graph $C_n$, 
\[
A_{k}=P_{C_{n},k}
\]
for each $1\leq k < n$.

Also note that the definitions of the adjancency and  Laplacian matrices of a simple graph given in Section \ref{sec-introduction} can be easily extended to any edge-weighted graphs in a straightforward way: The {adjacency matrix} $A_{G'}$ of a edge-weighted graph $G'$ with $n$ vertices is the $n \times n$ matrix whose $(i,j)$ entry is the weight on edge $v_i v_j$ and the {degree matrix} $D_{G'}$ of $G'$ is
the $n \times n$ diagonal matrix whose $(i,i)$ entry is the sum of weights on edges $v_i v_k$ over all $v_k$ incident with $v_i$. Then the {Laplacian matrix} of $G'$ can be defined as  
$L_{G'} = D_{G'} - A_{G'}$.

\medskip

\begin{definition} \label{m-lap}
Let $G$ be a {simple graph} with $n$ vertices. 
For each $1\leq m < n$, let $G_{m}$ be the weighted simple graph 
whose adjacency matrix is 
 \[
a_{1,m} P_{G,1} + a_{2,m} P_{G,2} + \cdots + a_{m,m} P_{G,m}
\]
where the coefficients $a_{k,m}$ are the solution to equation \eqref{Laplacian equ}.
Then the 
\underline{$m$-Laplacian}  of $G$, denoted by $L^{(m)}_G$, is the Laplacian matrix of $G_m$ 
\[
L^{(m)}_G \colonequals L_{G_m}.
\]       
\end{definition}

We observe that all the diagonal entries of adjacency matrix of $G_m$ is zero, and the adjacency matrix of $G_m$ can be considered the \underline{$m$-adjacency matrix} of $G$.

We also remark that when $m=1$, $L^{(1)}_G$ is the ordinary graph Laplacian of $G$.  
Also the coefficients $a_{k,m}$ are negative when $k$ is even (Theorem \ref{laplacin expl}). 
Thus, the edge weights on $G_m$ are not necessarily the same sign and therefore 
$L_G^{(m)}$ is the graph Laplacian of a weighted (signed) graph.

\medskip

Note that the graph Laplacian $L_G$ can be appreciated as a matrix version of the negative discrete Laplacian operator acting on graph $G$. 
In this context, we assume that a particular grid point only affects adjacent grid points. 

However, it seems natural to assume that any two connected vertices affect each other, even if they are not adjacent.
For this reason, we consider $A_{k}$ with $k \geq 2$ in Definition \ref{cycle m-lap}. 
The matrix $A_{k}$ describes these interactions.
Additionally, we can reasonably assume that the greater the distance between two vertices, the less impact they have.
As expected, the absolute value of $a_{k,m}$ decreases as $k$ increases.

\begin{example} 
By Theorem \ref{laplacin expl}, $a_{1,3}=\frac{3}{2}$, $a_{2,3}=-\frac{3}{20}$, and $a_{3,3}=\frac{1}{90}$. 
The weighted graph $G_m$ with $G=C_7$ and $m=3$ is given in Figure \ref{fig-C-7-3}.
\end{example}     

\begin{figure}    
 \[
 \begin{tikzpicture}[x=3.2cm, y=3.2cm]
	\vertex (v1) at (51:1) [label=above:$v_{1}$]{};
    \vertex (v2) at (103:1) [label=above:$v_{2}$]{};
	\vertex (v3) at (154:1) [label=left:$v_{3}$]{};
	\vertex (v4) at (206:1) [label=left:$v_{4}$]{};
	\vertex (v5) at (257:1) [label=below:$v_{5}$]{};
	\vertex (v6) at (309:1) [label=below:$v_{6}$]{};
	\vertex (v7) at (360:1) [label=right:$v_{7}$]{};
	\path 
		(v1) edge node{$\frac{3}{2}$} (v2)
        (v2) edge node{$\frac{3}{2}$} (v3)
        (v3) edge node{$\frac{3}{2}$} (v4)
        (v4) edge node{$\frac{3}{2}$} (v5)
        (v5) edge node{$\frac{3}{2}$} (v6)
        (v6) edge node{$\frac{3}{2}$} (v7)
        (v7) edge node{$\frac{3}{2}$} (v1)
        (v1) edge node{$-\frac{3}{20}$} (v3)
        (v2) edge node{$-\frac{3}{20}$} (v4)
        (v3) edge node{$-\frac{3}{20}$} (v5)
        (v4) edge node{$-\frac{3}{20}$} (v6)
        (v5) edge node{$-\frac{3}{20}$} (v7)
        (v6) edge node{$-\frac{3}{20}$} (v1)
        (v7) edge node{$-\frac{3}{20}$} (v2)
        (v1) edge node{$\frac{1}{90}$} (v4)
        (v2) edge node{$\frac{1}{90}$} (v5)
        (v3) edge node{$\frac{1}{90}$} (v6)
        (v4) edge node{$\frac{1}{90}$} (v7)
        (v5) edge node{$\frac{1}{90}$} (v1)
        (v6) edge node{$\frac{1}{90}$} (v2)
        (v7) edge node{$\frac{1}{90}$} (v3)
	;
\end{tikzpicture}
\]
\caption{The graph $G_m$ with $G=C_7$ and $m=3$}
\label{fig-C-7-3}
\end{figure}

\medskip

\subsection{Properties of $m$-Laplacian} 
From the following proposition, we can see that the $m$-Laplacian $L_{G}^{(m)}$ also has similar properties to the graph Laplacian $L_{G}$.
The following is evident by Definition \ref{m-lap}.

\begin{proposition}
Given two graphs $G$ and $H$ having disjoint vertex sets, let $G \cup H$ be the union of two graphs.
For $1 \leq i \leq n_{1}$ and $1 \leq j \leq n_{2}$, 
let $\lambda_{i}$ and $\mu_{j}$ be the eigenvalues of $L^{(m)}_G$ and $L^{(m)}_G$, respectively. 
Then eigenvalues of $L^{(m)}_{G \cup H}$ are 
\[\lambda_{i} \text{  and  }  \mu_{j}\,\]  
for all $1\leq i \leq n_{1}$ and $1\leq j \leq n_{2}$.
\end{proposition}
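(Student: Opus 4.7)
The plan is to show that $L^{(m)}_{G \cup H}$ is block-diagonal with diagonal blocks $L^{(m)}_G$ and $L^{(m)}_H$, after which the eigenvalue statement is immediate since a block-diagonal matrix has spectrum equal to the union of the spectra of its blocks.

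The first step is to examine the open path matrices $P_{G \cup H, k}$. Since $G$ and $H$ have disjoint vertex sets and there are no edges between them in $G \cup H$, any open path of length $k$ in $G \cup H$ must lie entirely inside $G$ or entirely inside $H$. Ordering the vertices of $G \cup H$ so that the $n_1$ vertices of $G$ come first and the $n_2$ vertices of $H$ come afterwards, this yields the block decomposition
\[
P_{G \cup H, k} = \begin{bmatrix} P_{G,k} & 0 \\ 0 & P_{H,k} \end{bmatrix}
\]
for every $1 \leq k < n_1 + n_2$ (with the convention that $P_{G,k} = 0$ when $k \geq n_1$ and similarly for $H$).

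The second step uses Definition \ref{m-lap}. Since the adjacency matrix of $(G \cup H)_m$ is a linear combination of the $P_{G \cup H, k}$ with coefficients $a_{k,m}$, it inherits the same block-diagonal structure, with diagonal blocks equal to the adjacency matrices of $G_m$ and $H_m$. The degree matrix $D_{(G \cup H)_m}$ is diagonal and its $(i,i)$ entry depends only on the row sum of the corresponding block, so it is also block-diagonal with blocks $D_{G_m}$ and $D_{H_m}$. Subtracting gives
\[
L^{(m)}_{G \cup H} = L_{(G \cup H)_m} = \begin{bmatrix} L_{G_m} & 0 \\ 0 & L_{H_m} \end{bmatrix} = \begin{bmatrix} L^{(m)}_G & 0 \\ 0 & L^{(m)}_H \end{bmatrix}.
\]

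The third step is immediate: the characteristic polynomial of a block-diagonal matrix factors as the product of the characteristic polynomials of its blocks, so the spectrum of $L^{(m)}_{G \cup H}$ is the multiset union of the spectra of $L^{(m)}_G$ and $L^{(m)}_H$, namely the $\lambda_i$ together with the $\mu_j$. There is no real obstacle here; the only point that requires attention is the open-path observation in the first step, which relies on the fact that an open path is required to consist of distinct vertices and thus cannot jump between the two components via a repeated vertex or otherwise.
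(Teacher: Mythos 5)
Your proof is correct and is essentially the argument the paper has in mind: the paper states this proposition without proof, remarking only that it is evident from Definition \ref{m-lap}, and your block-diagonal decomposition of the open path matrices, the $m$-adjacency matrix, the degree matrix, and hence $L^{(m)}_{G\cup H}$ is exactly the justification being omitted. The details you supply (including the observation that an open path cannot cross between components, and the convention for $k$ exceeding a component's size) are all sound.
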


\medskip

For $G=C_{n}$, the length $k$ open path matrix $P_{G,k}$ can be written as a power sum of adjacency matrix $A_{C_{n}}$.
This means that $L^{(m)}_{C_{n}}$, like $L_{C_n}$, can be expressed using only the adjacency and diagonal matrices of $C_n$.
We give the following proposition and example without providing proof.

 \begin{proposition} \label{power sum cycle high Lap}
     \begin{equation*}
         L^{(m)}_{C_{n}}= \sum_{k=0}^{m}c_{k,m}A_{C_{n}}^{k},
     \end{equation*}
     where 
     \begin{equation*}
         c_{k,m}= \begin{cases}
2\displaystyle\sum_{j=1}^{[\frac{m+1}{2}]}a_{2j-1,m} +4\displaystyle\sum_{j=1}^{[\frac{m+2}{4}]}a_{4j-2,m}  & \text{ if } k=0 \\ 
\displaystyle\sum_{j=1}^{[\frac{m-k+2}{2}]}(-1)^{j}d_{j,k}a_{2j+k-2,m} & \text{ if } k \geq 1.
\end{cases}
     \end{equation*}
     Here $d_{j,1}=2j-1$, $d_{1,k}=1$, and $d_{j+1,k+1}-d_{j,k+1}=d_{j+1,k}$ for all $j,k \in \mathbb{Z}_{>0}.$
 \end{proposition}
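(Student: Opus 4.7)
The plan is to reduce everything to the cycle's adjacency matrix by expressing each distance indicator $A_k = P_{C_n, k}$ of Notation~\ref{def of A_k} as an explicit polynomial in $A := A_{C_n}$, substituting into Definition~\ref{cycle m-lap}, and then collecting the coefficient of each power $A^p$. First I would establish, by a direct walk-counting argument on $C_n$, the three-term recurrence $A \cdot A_k = A_{k-1} + A_{k+1}$ for $k \geq 2$, together with $A_0 = I$, $A_1 = A$, $A_2 = A^2 - 2I$ (the anomaly at $k=1$, where $A \cdot A_1 = 2I + A_2$ rather than $A_0 + A_2$, comes from the two length-two closed walks at every vertex). Solving this Chebyshev-type recurrence by induction on $k$ yields the closed form
\[
A_k \;=\; \sum_{j=0}^{[k/2]} (-1)^j \, \frac{k}{k-j}\binom{k-j}{j}\, A^{k-2j} \qquad (k \geq 1).
\]

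Next I would substitute this into $L^{(m)}_{C_n} = 2\bigl(\sum_{k=1}^{m} a_{k,m}\bigr) I - \sum_{k=1}^{m} a_{k,m} A_k$ and collect the coefficient of each $A^p$. For $p \geq 1$, only pairs $(k, j)$ with $k - 2j = p$ contribute; reindexing by $j' := j+1$ so that $k = 2j' + p - 2$ and $1 \leq j' \leq [(m-p+2)/2]$, one obtains
\[
c_{p, m} \;=\; \sum_{j'=1}^{[(m-p+2)/2]} (-1)^{j'}\, d_{j', p} \, a_{2j'+p-2,\, m}, \qquad d_{j, k} \;:=\; \frac{2j+k-2}{j+k-1}\binom{j+k-1}{k},
\]
which matches the stated formula. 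For $p = 0$, only even $k$ in each expansion of $A_k$ contribute an $I$-term, with coefficient $2(-1)^{k/2}$; combining with the outer $2\bigl(\sum_k a_{k,m}\bigr)I$ shows that $a_{k,m}$ contributes $2$ to $c_{0,m}$ when $k$ is odd, $0$ when $k \equiv 0 \pmod 4$, and $4$ when $k \equiv 2 \pmod 4$. Separating these three parity classes and rewriting via $k = 2j-1$ or $k = 4j-2$ yields exactly the two displayed sums in $c_{0,m}$ with upper limits $[(m+1)/2]$ and $[(m+2)/4]$.

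Finally I would verify that the closed form $d_{j,k} = \tfrac{2j+k-2}{j+k-1}\binom{j+k-1}{k}$, equivalently $d_{j,k} = 2\binom{j+k-1}{k} - \binom{j+k-2}{k-1}$, agrees with the paper's defining properties. The initial values $d_{j,1} = 2j-1$ and $d_{1,k} = 1$ are immediate, and the Pascal-type recurrence $d_{j+1,k+1} - d_{j,k+1} = d_{j+1,k}$ follows from two applications of Pascal's rule $\binom{n+1}{r} - \binom{n}{r} = \binom{n}{r-1}$ to the equivalent form. The main obstacle is not any deep content but the bookkeeping: one must handle the boundary case $k=1$ of the $A$-recurrence separately (or absorb it into the closed form), carry out the parity case split cleanly for $c_{0,m}$, and reindex between $(k, j)$ and $(j', p)$ without slips; a mild hypothesis such as $2m < n$ may also be needed to ensure that $A_1, \ldots, A_m$ remain distinct distance indicators on $C_n$.
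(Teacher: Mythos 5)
Your proposal is correct, and there is nothing in the paper to compare it against: the paper explicitly states that Proposition~\ref{power sum cycle high Lap} is given \emph{without} proof, so your argument fills a genuine gap rather than duplicating an existing one. The core identity $A_k = 2T_k(A/2)$ (equivalently the recurrence $A\,A_k = A_{k-1}+A_{k+1}$ with the $k=1$ anomaly $A\,A_1 = 2I + A_2$) is the right mechanism, the Chebyshev-type expansion $A_k = \sum_{j=0}^{[k/2]}(-1)^j\tfrac{k}{k-j}\binom{k-j}{j}A^{k-2j}$ is the standard one, and your closed form $d_{j,k}=\tfrac{2j+k-2}{j+k-1}\binom{j+k-1}{k}=2\binom{j+k-1}{k}-\binom{j+k-2}{k-1}$ does satisfy $d_{j,1}=2j-1$, $d_{1,k}=1$, and the Pascal-type recurrence $d_{j+1,k+1}-d_{j,k+1}=d_{j+1,k}$, so it is the unique solution of the paper's defining relations. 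I checked your reindexing and the parity split for $c_{0,m}$ against the paper's worked cases $m=3,4$ (e.g.\ $c_{1,3}=-a_{1,3}+3a_{3,3}$ via $d_{2,1}=3$, and $c_{2,4}=-a_{2,4}+4a_{4,4}$ via $d_{2,2}=4$, with $a_{4,4}$ correctly dropping out of $c_{0,4}$ because $k\equiv 0 \pmod 4$ contributes zero), and everything matches. Your closing caveat is also well taken: the walk-counting recurrence degenerates when $2k\equiv 0 \pmod n$, so a hypothesis like $2m<n$ (absent from the paper's statement, which only assumes $1\le m<n$) is genuinely needed for the argument as written; it would be worth making that restriction explicit or handling the boundary cases $n=2m, 2m+1$ separately.
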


\begin{example} Let $A$ be the adjacency matrix $A_{C_{n}}$ of the cycle graph $C_{n}$. Then
\begin{align*}
        L^{(1)}_{C_{n}}&= 2a_{1,1}I -a_{1,1}A\\
        &=2I-A. \\
        L^{(2)}_{C_{n}}&= 2(a_{1,2}+2a_{2,2})I -a_{1,2}A-a_{2,2}A^2\\
        &=\frac{7}{3}I-\frac{4}{3}A+\frac{1}{12}A^2.\\
        L^{(3)}_{C_{n}}&= 2(a_{1,3}+2a_{2,3}+a_{3,3})I 
         -(a_{1,3}-3a_{3,3})A-a_{2,3}A^2-a_{3,3}A^3\\
        &=\frac{109}{45}I-\frac{22}{15}A+\frac{3}{20}A^2-\frac{1}{90}A^3.\\
        L^{(4)}_{C_{n}}&= 2(a_{1,4}+2a_{2,4}+a_{3,4})I \\
        & \qquad -(a_{1,4}-3a_{3,4})A-(a_{2,4}-4a_{4,4})A^2-a_{3,4}A^3-a_{4,4}A^4\\
        &=\frac{772}{315}I-\frac{32}{21}A+\frac{27}{140}A^2-\frac{8}{315}A^3+\frac{1}{560}A^4.
    \end{align*}
\end{example}

\bigskip

\section{$m$-Laplacian spectrum} \label{sec-$m$-Laplacian spectrum}

In this section, we give explicit expressions of $P_{G,k}$ and then compute eigenvalues and eigenvectors of $L^{(m)}_{G}$ for the cycle, star, and complete graphs.

Before doing that, we first recall the following well-known lemma. 

\begin{lemma} \label{circulant spectrum}
     Consider the $n\times n$ circulant matrix $C$:
     \[
C = \begin{bmatrix}
c_{0} & c_{n-1} & \cdots & c_{1} \\
c_{1} & c_{0} & \cdots & c_{2} \\
\vdots & \vdots & \ddots & \vdots \\
c_{n-1} & c_{n-2} & \cdots & c_{0}
\end{bmatrix}.
\]
The eigenvectors of the matrix $C$ are 
    \[ \mathbf{v}_{j}=(1, \omega^j, \omega^{2j},...,\omega^{(n-1)j})^T, \quad j=0,1,...,n-1,\]
    where $\omega= \exp(\frac{2\pi i}{n})$ and the corresponding eigenvalues are given by
    \[ \lambda_{j}= c_{0}+c_{1}\omega^{j}+c_{2}\omega^{2j}+...+c_{n-1}\omega^{(n-1)j}, \quad j=0,1,...,n-1.\]
\end{lemma}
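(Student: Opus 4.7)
The plan is to verify the claimed eigenvector/eigenvalue pairs by direct substitution and then invoke a Vandermonde argument to ensure no eigenvalue is missed.

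First I would read off the entries of $C$ from the displayed form: the first column is $(c_0, c_1, \ldots, c_{n-1})^T$ and each subsequent column is a cyclic downshift of the previous one, so $C_{k,\ell} = c_{(k-\ell) \bmod n}$ for $0 \le k, \ell \le n-1$. Then the $k$-th component of $C\mathbf{v}_j$ equals
\[
(C\mathbf{v}_j)_k \;=\; \sum_{\ell=0}^{n-1} c_{(k-\ell) \bmod n}\, \omega^{\ell j}.
\]

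Next I would reindex by setting $r = (k-\ell) \bmod n$, so that $\ell$ and $r$ both range over $\{0, 1, \ldots, n-1\}$. Since $\omega^n = 1$, the exponent in $\omega^{\ell j}$ may be reduced freely modulo $n$, allowing $\omega^{\ell j} = \omega^{(k-r)j} = \omega^{kj}\omega^{-rj}$. Factoring out $\omega^{kj}$ yields
\[
(C\mathbf{v}_j)_k \;=\; \omega^{kj}\sum_{r=0}^{n-1} c_r \,\omega^{-rj},
\]
and the sum on the right depends only on $j$, not on $k$. This exhibits $C\mathbf{v}_j$ as a scalar multiple of $\mathbf{v}_j$; comparing with the stated formula identifies the scalar as $\lambda_j$ (up to the conventional replacement $\omega \leftrightarrow \omega^{-1}$, equivalently $j \leftrightarrow n-j$, which leaves the unordered set of eigenvalues invariant).

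Finally I would confirm that the $n$ vectors $\mathbf{v}_0, \ldots, \mathbf{v}_{n-1}$ are linearly independent. This follows because the matrix $[\mathbf{v}_0 \mid \mathbf{v}_1 \mid \cdots \mid \mathbf{v}_{n-1}]$ is the $n \times n$ Vandermonde matrix at the pairwise distinct nodes $1, \omega, \omega^2, \ldots, \omega^{n-1}$, hence nonsingular. Therefore the $\mathbf{v}_j$ span $\mathbb{C}^n$ and the listed $\lambda_j$ exhaust the spectrum of $C$ with the correct multiplicities. There is no real obstacle in this proof; the entire argument is careful bookkeeping of the cyclic index $(k-\ell) \bmod n$ together with the identity $\omega^n = 1$.
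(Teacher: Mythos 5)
Your proof is correct and complete. The paper offers no proof of this lemma at all---it is simply recalled as well known---so there is nothing of the author's to compare against; your direct verification (reindexing the convolution sum and factoring out $\omega^{kj}$) plus the Vandermonde nonsingularity argument to confirm that the $n$ eigenpairs exhaust the spectrum is the standard, fully rigorous route. One point you flag is worth keeping explicit: with the matrix as displayed, $C_{k\ell}=c_{(k-\ell)\bmod n}$, so your computation yields $C\mathbf{v}_j=\bigl(\sum_{r=0}^{n-1}c_r\omega^{-rj}\bigr)\mathbf{v}_j$, meaning the eigenvalue the lemma attaches to $\mathbf{v}_j$ is literally the one your calculation attaches to $\mathbf{v}_{n-j}$. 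As you observe, this only permutes the eigenvector--eigenvalue pairing and leaves the multiset of eigenvalues unchanged; moreover, every circulant actually used in the paper is symmetric ($c_r=c_{n-r}$), so the two conventions coincide there and the discrepancy is harmless.
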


\medskip

Now we compute the eigenvalues of the $m$-Laplacian of the cycle graph $C_n$. 
 
\begin{proposition} \label{eigenvalue of LapCircle}
    The eigenvectors of $L^{(m)}_{C_{n}}$ are 
    \[ \mathbf{v}_{j}=(1, \omega^j, \omega^{2j},...,\omega^{(n-1)j})^T, \quad j=0,1,...,n-1,\]
    where $\omega= \exp(\frac{2\pi i}{n})$.
    The corresponding eigenvalues are 
    \[ \lambda_{j}= 4\sum_{k=1}^{m}a_{k,m}\sin^2{\frac{\pi kj}{n}}, \quad j=0,1,...,n-1.\]
\end{proposition}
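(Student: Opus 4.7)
The plan is to diagonalize $L^{(m)}_{C_n}$ by showing that it is a $\mathbb{C}$-linear combination of circulant matrices, all of which share the discrete Fourier basis as common eigenvectors. By Definition \ref{cycle m-lap},
\[
L^{(m)}_{C_n} = 2\!\left(\sum_{k=1}^{m} a_{k,m}\right) I_n - \sum_{k=1}^{m} a_{k,m} A_k,
\]
so it suffices to determine the eigenvalues of each $A_k$ at $\mathbf{v}_j$ and combine them linearly.

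First I would observe that $A_k$, as described in Notation \ref{def of A_k}, is a circulant matrix: its first row has entries $c_k = c_{n-k} = 1$ and all other entries are $0$. Lemma \ref{circulant spectrum} then immediately yields
\[
A_k \mathbf{v}_j = \bigl(\omega^{jk} + \omega^{j(n-k)}\bigr)\mathbf{v}_j = \bigl(\omega^{jk} + \omega^{-jk}\bigr)\mathbf{v}_j = 2\cos\!\frac{2\pi jk}{n}\,\mathbf{v}_j,
\]
since $\omega^n = 1$. The identity matrix $I_n$ trivially has $\mathbf{v}_j$ as an eigenvector with eigenvalue $1$, so each $\mathbf{v}_j$ is simultaneously an eigenvector of every term appearing in $L^{(m)}_{C_n}$.

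Combining these by linearity, the eigenvalue of $L^{(m)}_{C_n}$ corresponding to $\mathbf{v}_j$ is
\[
\lambda_j = 2 \sum_{k=1}^{m} a_{k,m} - \sum_{k=1}^{m} a_{k,m}\cdot 2\cos\!\frac{2\pi jk}{n} = 2 \sum_{k=1}^{m} a_{k,m}\!\left(1 - \cos\!\frac{2\pi jk}{n}\right).
\]
Applying the half-angle identity $1 - \cos(2\theta) = 2\sin^2\theta$ with $\theta = \pi jk/n$ converts this expression into $4\sum_{k=1}^{m} a_{k,m}\sin^2(\pi jk/n)$, which is exactly the claimed formula.

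Because there is no real obstacle here—the argument is a direct application of Lemma \ref{circulant spectrum} and a trigonometric identity—the only point that deserves care is verifying that each $A_k$ really is circulant with the correct first row (which is immediate from Notation \ref{def of A_k}, noting that $v_1 \sim v_{1+k}$ and $v_1 \sim v_{1-k} = v_{n+1-k}$). One should also briefly remark that the vectors $\mathbf{v}_0, \ldots, \mathbf{v}_{n-1}$ form a basis of $\mathbb{C}^n$ (they are columns of the Vandermonde matrix for the distinct $n$th roots of unity), so the list of eigenpairs produced is complete.
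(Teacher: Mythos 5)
Your argument is correct and is essentially identical to the paper's proof: both identify each $A_k$ as a circulant matrix with $c_k=c_{n-k}=1$, invoke Lemma \ref{circulant spectrum} to get the eigenvalue $2\cos(2\pi kj/n)$ on $\mathbf{v}_j$, and combine linearly via the half-angle identity. Your closing remark that the $\mathbf{v}_j$ form a complete basis is a small but welcome addition the paper leaves implicit.
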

 \begin{proof}
     Consider the $n\times n$ circulant matrix $C$:
     \[
C = \begin{bmatrix}
c_{0} & c_{n-1} & \cdots & c_{1} \\
c_{1} & c_{0} & \cdots & c_{2} \\
\vdots & \vdots & \ddots & \vdots \\
c_{n-1} & c_{n-2} & \cdots & c_{0}
\end{bmatrix}.
\]
    Since $A_{k}$ is a circulant matrix with $c_{k}=c_{n-k}=1$ and others are all zero, by Lemma \ref{circulant spectrum}, the corresponding eigenvalues are 
    \[ \bar{\lambda}_{j}=\omega^{kj}+\omega^{(n-k)j}=2\cos{\frac{2\pi kj}{n}},\]
    for each $j=0,1,...,n-1.$
    Therefore the eigenvectors of $L^{(m)}_{C_{n}}=\sum_{k=1}^{m}a_{k,m}(2I-A_k)$ are $\mathbf{v}_{j}$'s and the corresponding eigenvalues are 
    \[ \lambda_{j}= 4\sum_{k=1}^{m}a_{k,m}\sin^2{\frac{\pi kj}{n}}, \quad j=0,1,...,n-1.\]
 \end{proof}

\medskip

Next, let us compute the eigenvalues and eigenvectors of the $m$-Laplacian of the complete  graph $K_n$. 

 \begin{proposition} \label{spectrum of complete graph}
The eigenvectors of $L^{(m)}_{K_{n}}$ are 
\[ 
\mathbf{v}_{j}=(1, \omega^j, \omega^{2j},...,\omega^{(n-1)j})^T, \quad j=0,1,...,n-1,
\]
where $\omega=\exp(\frac{2\pi i}{n})$.
The corresponding eigenvalues are given by
\[ 
\lambda_{j}= 
\begin{cases}
    0 &\text{  if  } j=0 \\
    n \times \sum_{k=1}^{m}a_{k,m}(k-1)!\binom{n-2}{k-1} &\text{  otherwise.}
\end{cases}
\]     
 \end{proposition}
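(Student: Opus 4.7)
The plan is to reduce everything to a simple combinatorial computation of $P_{K_n,k}$ and then diagonalize the resulting Laplacian, which turns out to be a scalar multiple of $nI-J$.

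First I would compute $P_{K_n,k}$ explicitly. By the definition of an open path, for distinct vertices $v_i, v_j$ the $(i,j)$ entry of $P_{K_n,k}$ counts sequences $v_i = w_0, w_1, \dots, w_{k-1}, w_k = v_j$ with all $w_\ell$ distinct. Since $K_n$ has every edge available, this is just the number of ways to pick an ordered $(k-1)$-tuple of distinct intermediate vertices from the $n-2$ vertices other than $v_i, v_j$, namely $(n-2)(n-3)\cdots(n-k) = (k-1)!\binom{n-2}{k-1}$. The diagonal entries are $0$ by definition. Hence
\[
P_{K_n,k} \;=\; (k-1)!\binom{n-2}{k-1}\,(J-I),
\]
where $J$ is the all-ones matrix.

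Next I would assemble the weighted adjacency matrix of $(K_n)_m$ from Definition \ref{m-lap}. Setting
\[
c \;\colonequals\; \sum_{k=1}^{m} a_{k,m}(k-1)!\binom{n-2}{k-1},
\]
we get $A_{(K_n)_m} = c(J-I)$, which is again a scalar matrix off the diagonal. Thus each vertex of $(K_n)_m$ has weighted degree $c(n-1)$, so $D_{(K_n)_m} = c(n-1)I$, and
\[
L^{(m)}_{K_n} \;=\; c(n-1)I - c(J-I) \;=\; c\bigl(nI - J\bigr).
\]

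Finally I would verify the stated eigenvectors. For $j=0$ one has $\mathbf{v}_0 = \mathbf{1}$, and $(nI-J)\mathbf{1} = n\mathbf{1}-n\mathbf{1}= 0$, so the eigenvalue is $0$. For $j=1,\dots,n-1$, the geometric series identity $\sum_{\ell=0}^{n-1}\omega^{\ell j} = 0$ gives $J\mathbf{v}_j = 0$, hence $(nI-J)\mathbf{v}_j = n\mathbf{v}_j$, and multiplying by $c$ yields the claimed eigenvalue $n\sum_{k=1}^m a_{k,m}(k-1)!\binom{n-2}{k-1}$. There is no real obstacle here; the only nontrivial step is the path-counting identity, and everything else is a direct application of the spectral decomposition of $J$ (which is also a special case of Lemma \ref{circulant spectrum} applied to the circulant matrix $J-I$).
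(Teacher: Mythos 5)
Your proposal is correct and follows essentially the same route as the paper: both rest on the identity $P_{K_n,k}=(k-1)!\binom{n-2}{k-1}(J-I)$ and then diagonalize using the vectors $\mathbf{v}_j$. The only cosmetic difference is that you collapse the whole Laplacian to $c(nI-J)$ before diagonalizing $J$ directly, whereas the paper applies Lemma \ref{circulant spectrum} to each $P_{K_n,k}$ separately and sums the eigenvalues; your version also supplies the path-counting justification that the paper leaves implicit.
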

 \begin{proof}
     Note that 
     \[P_{n,k}=(k-1)!\binom{n-2}{k-1}(J-I), \]
     where $J$ means a $n \times n$ matrix in which all elements are $1$.

By Lemma \ref{circulant spectrum}, the eigenvectors of the matrix $P_{K_n,k}$ are 
    \[ \mathbf{v}_{j}=(1, \omega^j, \omega^{2j},...,\omega^{(n-1)j})^T, \quad j=0,1,...,n-1,\]
    where $\omega=\exp(\frac{2\pi i}{n})$ and the corresponding eigenvalues are given by
    \[ \bar{\lambda}_{j}= 
    \begin{cases}
              (n-1) (k-1)!  \binom{n-2}{k-1} &\text{  if  } j=0 \\
              -(k-1)!  \binom{n-2}{k-1} &\text{  otherwise.}
            \end{cases}\]
     Since
     \[L^{(m)}_{K_{n}}=\sum_{k=1}^{m}a_{k,m}\left\{k!  \binom{n-1}{k}I-P_{K_n,k}\right\},\]
the eigenvectors of $L^{(m)}_{K_{n}}$ are $\mathbf{v}_{j}$ and the corresponding eigenvalues $\lambda_{j}$ are 
\[
    \lambda_{0}=\sum_{k=1}^{m}a_{k,m}\left\{k!\binom{n-1}{k}-(n-1)(k-1)!\binom{n-2}{k-1}\right\} 
    =0
\]
and 
\begin{align*}
    \lambda_{j}&=\sum_{k=1}^{m}a_{k,m}\left\{k!\binom{n-1}{k}+(k-1)!\binom{n-2}{k-1}\right\} \\
    &=n \times \sum_{k=1}^{m}a_{k,m}(k-1)!\binom{n-2}{k-1} 
    \qquad \text{for $j=1, ..., n-1$.}
\end{align*}     
 \end{proof}

\medskip

Our last task is to compute the eigenvalues and eigenvectors of the $m$-Laplacian of the star graph $S_{n-1}$ with $n$ vertices. 
\begin{proposition} 
    The eigenvectors $\mathbf{v}_{j}$ of $L^{(m)}_{S_{n-1}}$ are 
    \[ \mathbf{v}_{j}= 
    \begin{cases}
              (1,1,...,1)^T &\text{  if  } j=1 \\
              (1-n,1,...,1)^T &\text{  if  } j=2 \\
              \epsilon_{j-1}-\epsilon_{j} &\text{  otherwise}
    \end{cases}\]
where $\epsilon_{k}$ is a $n\times 1$ matrix whose $(k,1)$ entry is $1$, and $0$ otherwise.
The corresponding eigenvalues are given by
    \[ \lambda_{j}= 
    \begin{cases}
              0 &\text{  if  } j=1 \\
              na_{1,m} &\text{  if  } j=2 \\
              a_{1,m}+(n-1)a_{2,m} &\text{  otherwise.}
    \end{cases}\]
\end{proposition}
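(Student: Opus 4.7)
The approach is direct verification, once I unpack what $L^{(m)}_{S_{n-1}}$ actually is for the star.

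First I would compute the length-$k$ open path matrices $P_{S_{n-1},k}$. Labeling $v_1$ as the center and $v_2,\dots,v_n$ as the leaves, the crucial structural observation is that every path in $S_{n-1}$ must visit the center in every internal step, and the center can appear only once in a path. Hence the only paths are the $n-1$ edges from $v_1$ to the leaves (so $P_{S_{n-1},1}$ is the ordinary adjacency matrix of $S_{n-1}$), together with length-$2$ paths of the form $v_i \to v_1 \to v_j$ for distinct leaves $v_i,v_j$ (so $P_{S_{n-1},2}$ has zero first row and first column, and all other off-diagonal entries equal to $1$). For every $k \geq 3$ no such paths exist, giving $P_{S_{n-1},k}=0$.

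Next I would write down the $m$-adjacency matrix as $a_{1,m}P_{S_{n-1},1}+a_{2,m}P_{S_{n-1},2}$, since all higher-order terms vanish, and read off the weighted degrees: the center has degree $(n-1)a_{1,m}$, while each leaf has degree $a_{1,m}+(n-2)a_{2,m}$. From this, $L^{(m)}_{S_{n-1}}=D_{G_m}-(a_{1,m}P_{S_{n-1},1}+a_{2,m}P_{S_{n-1},2})$ is fully explicit.

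With the matrix in hand, the verification is a direct multiplication, one eigenvector at a time. For $\mathbf{v}_1$ the eigenvalue $0$ is automatic since any weighted graph Laplacian has constant vectors in its kernel. For $\mathbf{v}_2=(1-n,1,\dots,1)^T$, computing the center row gives $(n-1)a_{1,m}(1-n)-(n-1)a_{1,m}=-n(n-1)a_{1,m}=na_{1,m}\cdot(1-n)$, and a leaf row gives $[a_{1,m}+(n-2)a_{2,m}]-a_{1,m}(1-n)-(n-2)a_{2,m}=na_{1,m}$. For $\mathbf{v}_j=\epsilon_{j-1}-\epsilon_j$ with $j \geq 3$, the center row vanishes by the symmetric treatment of the two leaves, the $v_{j-1}$ and $v_j$ rows each produce $\pm\bigl(a_{1,m}+(n-1)a_{2,m}\bigr)$, and every other leaf row vanishes because its entries in columns $j-1$ and $j$ are both equal to $-a_{2,m}$.

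To finish, I would note that $\mathbf{v}_1,\mathbf{v}_2$ span the subspace of vectors that are constant on the leaves, while $\mathbf{v}_3,\dots,\mathbf{v}_n$ span the complementary subspace of vectors that vanish on the center and sum to zero on the leaves; these two subspaces together exhaust $\mathbb{R}^n$, so we have accounted for all eigenvectors with correct multiplicities. The main (modest) obstacle here is the combinatorial setup — pinning down $P_{S_{n-1},k}$ and confirming it vanishes for $k \geq 3$; once that is in place, the rest is routine matrix--vector arithmetic.
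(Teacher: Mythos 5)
Your proof is correct. Both arguments rest on the same explicit matrix for $L^{(m)}_{S_{n-1}}$ --- only $a_{1,m}$ and $a_{2,m}$ survive because the star admits no open paths of length $\geq 3$ --- but the paper simply writes this matrix down, whereas you justify the structure of $P_{S_{n-1},1}$, $P_{S_{n-1},2}$ and the vanishing of $P_{S_{n-1},k}$ for $k\geq 3$ from the fact that every edge of the star contains the center and the center may occur only once in a path; making that explicit is a genuine improvement. From there the routes diverge. The paper takes a generic eigenvector $(c_1,\dots,c_n)^T$, expands the eigenvalue equation into two scalar relations, and runs a case analysis to \emph{derive} the eigenvalues; in the case $\lambda=a_{1,m}+(n-1)a_{2,m}$ it needs, and asserts without proof, that $(n-2)a_{1,m}+a_{1,m}^2/a_{2,m}-(n-1)a_{2,m}\neq 0$ in order to force $c_1=0$. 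You instead \emph{verify} the three claimed families by direct matrix--vector multiplication (all of your row computations check out) and then close the argument with a dimension count: the span of $\mathbf{v}_1,\mathbf{v}_2$ is the $2$-dimensional space of vectors constant on the leaves, the span of $\mathbf{v}_3,\dots,\mathbf{v}_n$ is the $(n-2)$-dimensional space of vectors vanishing at the center with zero leaf-sum, and these meet trivially, so you have a full eigenbasis. Your route buys a complete spectrum-with-multiplicities statement while sidestepping the unverified nondegeneracy condition in the paper's case analysis, at the modest cost of not explaining how the eigenvectors were found in the first place.
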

\begin{proof}
Let $v_{1}$ be the center vertex adjacent to all the other vertices and $\mathbf{v}=(c_{1},c_{2},...,c_{n})^T$ be a eigenvector corresponding to eigenvalue $\lambda$ of $L^{(m)}_{S_{n-1}}$.
Then 
\begin{equation} \label{star graph eqn}
\begin{bmatrix}
(n-1)a_{1,m} & -a_{1,m} & \cdots  & -a_{1,m} \\
-a_{1,m} & a_{1,m}+(n-2)a_{2,m} & \cdots  & -a_{2,m} \\
\vdots & \vdots & \ddots & \vdots \\
-a_{1,m} &  -a_{2,m} & \cdots  & a_{1,m}+(n-2)a_{2,m}
\end{bmatrix}
\begin{bmatrix} c_{1} \\  c_{2} \\ \vdots \\ c_{n} \end{bmatrix} = \lambda
\begin{bmatrix} c_{1} \\  c_{2} \\ \vdots \\ c_{n} \end{bmatrix}.
\end{equation}
By expanding \eqref{star graph eqn}, we obtain the following equations.
\begin{align}
    (n-1)a_{1,m}c_{1}-a_{1,m}(c_{2}+c_{3}+...+c_{n})&=\lambda c_{1}, \label{eq1}\\
    -a_{1,m}c_{1}-a_{2,m}(c_{2}+c_{3}+...+c_{n})&=\{\lambda-a_{1,m}-(n-1)a_{2,m} \}c_{k} \label{eq2}
\end{align}
for all $2 \leq k \leq n$.
By equation \eqref{eq2}, 
\[c_{2}=c_{3}=\cdots=c_{n} \text{   or   } \lambda=a_{1,m}+(n-1)a_{2,m}.\]

When $c_{2}=c_{3}=\cdots=c_{n}$, by equation \eqref{eq1}, 
\[c_{2}=\left(1-\frac{\lambda}{(n-1)a_{1,m}}\right)c_{1}.\]
Also, by equation \eqref{eq2},   
\[a_{1,m}c_{1}=\left(a_{1,m}-\lambda\right)\left(1-\frac{\lambda}{(n-1)a_{1,m}}\right)c_{1}.\]
If $c_{1}=0$ then $c_{1}=c_{2}=\cdots=c_{n}=0$, thus 
\begin{align*}
    a_{1,m}&=(a_{1,m}-\lambda)\left(1-\frac{\lambda}{(n-1)a_{1,m}}\right)\\
    &=a_{1,m}-\frac{\lambda}{n-1}-\lambda+\frac{\lambda^2}{(n-1)a_{1,m}}.
\end{align*}
Therefore, $\lambda=0$ or $\lambda=na_{1,m}$ and their corresponding eigenvectors are $(1,1,...,1)^T$ and $(1-n,1,...,1)^T$, respectively. 

\medskip

When $\lambda=a_{1,m}+(n-1)a_{2,m}$, by equation \eqref{eq2}, 
\[a_{1,m}c_{1}+a_{2,m}(c_{2}+c_{3}+...+c_{n})=0.\]
And by equation \eqref{eq1},
\begin{align*}
    (n-1)a_{1,m}c_1 + \frac{a_{1,m}^2}{a_{2,m}}c_{1} &= \lambda c_{1}\\
    &= \{a_{1,m}+(n-1)a_{2,m}\}c_{1}.
\end{align*}
Since $(n-2)a_{1,m}+ \frac{a_{1,m}^2}{a_{2,m}}- (n-1)a_{2,m} \neq 0$, $c_{1}=0$ and $c_{2}+c_{3}+\cdots+c_{n}=0$.
Thus $\epsilon_{j-1}-\epsilon_{j}$ is a eigenvector of $\lambda=a_{1,m}+(n-1)a_{2,m}$ for each $3 \leq j \leq n$.
\end{proof}

 \bigskip

\section{Existence of a weighted graph with a given Laplacian spectrum} \label{sec-existence of a signed graph}

In the proof of Proposition \ref{eigenvalue of LapCircle}, we observe that the eigenvectors of $A_k$'s are the same.
Thus, we can combine them linearly to form a weighted graph with the desired Laplacian spectrum.

 \begin{theorem} \label{existence of signed graphs}
 \hfill
     \begin{enumerate}
         \item Let $\Lambda_{1}$ be a multiset $\{\lambda_{1},\lambda_{1}, \lambda_{2}, \lambda_{2}, ..., \lambda_{m}, \lambda_{m}, 0\}$ with $\lambda_{1}\leq \lambda_{2} \leq \cdots \leq \lambda_{m}$.
         Then there exists a weighted graph $W_1$ with $2m+1$ vertices such that 
         \[spec(L_{W_1})=\Lambda_1.\]
         
         \item Let $\Lambda_{2}$ be a multiset $\{\lambda_{1},\lambda_{1}, \lambda_{2}, \lambda_{2}, ..., \lambda_{m-1}, \lambda_{m-1},0,\lambda\}$ with $\lambda_{1}\leq \lambda_{2} \leq \cdots \leq \lambda_{m-1}$.
         Then there exists a weighted graph $W_2$ with $2m$ vertices such that 
         \[spec(L_{W_2})=\Lambda_2.\]
     \end{enumerate}
 \end{theorem}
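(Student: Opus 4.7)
The plan is to construct both $W_1$ and $W_2$ as weighted circulant graphs whose adjacency matrix is a real linear combination $\sum_{k=1}^{\lfloor n/2\rfloor} b_k A_k$ of the matrices $A_k$ from Notation \ref{def of A_k}, with $n=2m+1$ in case (1) and $n=2m$ in case (2). Because every $A_k$ has constant row sum, the weighted degree matrix of this graph is a scalar multiple of $I_n$, so $L_W = d I_n-\sum_k b_k A_k$ is itself circulant. By Lemma \ref{circulant spectrum} it has the Fourier eigenvectors $\mathbf{v}_j=(1,\omega^j,\dots,\omega^{(n-1)j})^T$ with $\omega=e^{2\pi i/n}$, and the same computation as in Proposition \ref{eigenvalue of LapCircle} (taking account of the fact that for even $n$ the matrix $A_{n/2}$ has eigenvalue $(-1)^j$ on $\mathbf{v}_j$) gives
\[
\lambda_j \;=\; 4\sum_{k<n/2} b_k\sin^2\frac{\pi kj}{n} \;+\; b_{n/2}\bigl(1-(-1)^j\bigr),
\]
where the last term appears only when $n$ is even. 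In particular $\lambda_0=0$ automatically, and $\lambda_{n-j}=\lambda_j$ because both $\sin^2(\pi kj/n)$ and $(-1)^j$ are invariant under $j\mapsto n-j$.

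This conjugation symmetry produces exactly the repetition patterns in $\Lambda_1$ and $\Lambda_2$. For $n=2m+1$ the pairs $(j,n-j)$ partition $\{1,\dots,n-1\}$ into $m$ pairs, yielding $m$ doubled eigenvalues alongside $\lambda_0=0$ and matching $\Lambda_1$. For $n=2m$ the same pairing works for $j=1,\dots,m-1$, while $j=m$ is fixed by the symmetry and contributes one extra eigenvalue $\lambda$, matching $\Lambda_2$. Prescribing $\Lambda_1$ or $\Lambda_2$ therefore amounts to solving an $m\times m$ real linear system $M\mathbf{b}=\boldsymbol\lambda$ in the weights $b_1,\dots,b_{\lfloor n/2\rfloor}$, and the theorem follows once we show $M$ is invertible.

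The main step is this invertibility, which I would establish by a discrete Fourier argument. Suppose $M\mathbf{b}=\mathbf{0}$. Because the $j$-th equation is invariant under $j\mapsto n-j$, it also holds for $j=m+1,\dots,n-1$, and it is trivially satisfied at $j=0$; hence $\sum_k b_k\,\mathbf{u}_k=\mathbf{0}$ in $\mathbb{C}^n$, where $\mathbf{u}_k\in\mathbb{C}^n$ is the column of $M$ extended to all $n$ indices. In the DFT basis $\mathbf{e}_\ell=(\omega^{\ell j})_{j=0}^{n-1}$ one computes $\mathbf{u}_k=2\mathbf{e}_0-\mathbf{e}_k-\mathbf{e}_{n-k}$ for $k<n/2$, and $\mathbf{u}_{n/2}=\mathbf{e}_0-\mathbf{e}_{n/2}$ when $n$ is even. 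Since the indices $k$, $n-k$ (for $k<n/2$) and $n/2$ (when applicable) are pairwise distinct and all nonzero, reading off the coefficient of each $\mathbf{e}_k$ with $k\geq 1$ in $\sum_k b_k\,\mathbf{u}_k=\mathbf{0}$ forces $b_k=0$ for every $k$. Hence $M$ is invertible, the linear system has a unique solution $\mathbf{b}\in\mathbb{R}^m$ (possibly with mixed signs, which is why we must allow $W$ to be a weighted \emph{signed} graph), and the resulting weighted graph has the prescribed Laplacian spectrum.
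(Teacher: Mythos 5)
Your construction is the same as the paper's: a weighted circulant graph whose adjacency matrix is $\sum_k b_k A_k$, whose Laplacian eigenvalues $4\sum_{k<n/2} b_k\sin^2(\pi kj/n)+b_{n/2}(1-(-1)^j)$ automatically satisfy $\lambda_0=0$ and $\lambda_j=\lambda_{n-j}$, so that the theorem reduces to the invertibility of an $m\times m$ linear system in the weights. Where you genuinely diverge is in proving that invertibility. The paper performs column operations to turn its matrix $Z=\bigl(\sin^2(jk\pi/n)\bigr)$ into a Vandermonde-type matrix in the variables $\sin^{2}(j\pi/n)$ (using that $\sin^2(k\theta)$ is a polynomial in $\sin^2\theta$), and then argues that a polynomial of degree at most $m$ with zero constant term cannot have the $m+1$ distinct roots $0,\sin^2(\pi/n),\dots,\sin^2(m\pi/n)$. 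You instead extend the homogeneous system to all $n$ residues via the $j\mapsto n-j$ symmetry and expand each column in the character basis $\mathbf{e}_\ell=(\omega^{\ell j})_{j}$, whose linear independence is just the invertibility of the DFT (Vandermonde in the distinct roots of unity); since $\mathbf{u}_k=2\mathbf{e}_0-\mathbf{e}_k-\mathbf{e}_{n-k}$ and the indices $k$, $n-k$, $n/2$ are pairwise distinct and nonzero, reading off coefficients forces every $b_k=0$. Your route buys two things: it handles the odd and even cases uniformly, whereas the paper only asserts that case (2) ``can be shown using similar arguments,'' and it avoids the trigonometric identities needed to justify the paper's column reductions. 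Both arguments are correct; the underlying construction and the reduction to a linear system are identical.
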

\begin{proof} 
    Given $n$, consider a weighted graph $W$ whose adjacency matrix $A_W$ is $c_{1}A_{1}+c_{2}A_{2}+\cdots+c_{m}A_{m}$.
    Then the graph Laplacian of $W$, $L_W$, is
    \[\sum_{k=1}^{m}c_{k}(2I_{n}-A_{k}).\]
    Since the eigenvectors of $A_{k}$'s are the same, as the proof of Proposition \ref{eigenvalue of LapCircle} shows, eigenvalues of $L_W$ are
    \[\bar{\lambda}_{j}=4\sum_{k=1}^{m}c_{k}\sin^2{\frac{\pi kj}{n}} \quad \text{ for } j=0,1,...,n-1.\]
    Note that $\bar{\lambda}_{0}=0$ and $\bar{\lambda}_{j}=\bar{\lambda}_{n-j}$ for all $j=1,2,...,n-1$. 

(1) Suppose that $n=2m+1$.  
        We claim that the following matrix is invertible.
\[
Z = 
\begin{bmatrix}
\sin^2{\frac{\pi}{2m+1}} & \sin^2{\frac{2\pi}{2m+1}} & \cdots & \sin^2{\frac{m\pi}{2m+1}} \\
\sin^2{\frac{2\pi}{2m+1}} & \sin^2{\frac{4\pi}{2m+1}} & \cdots & \sin^2{\frac{2m\pi}{2m+1}} \\
\vdots & \vdots & \ddots & \vdots \\
\sin^2{\frac{m\pi}{2m+1}} & \sin^2{\frac{2m\pi}{2m+1}} & \cdots & \sin^2{\frac{m^2\pi}{2m+1}} 
\end{bmatrix}.
\]
If it is true, we can see that the statement (1) of this theorem is also true because the following matrix equation always has a solution.
\[-4\begin{bmatrix}
\sin^2{\frac{\pi}{2m+1}} & \sin^2{\frac{2\pi}{2m+1}} & \cdots & \sin^2{\frac{m\pi}{2m+1}} \\
\sin^2{\frac{2\pi}{2m+1}} & \sin^2{\frac{4\pi}{2m+1}} & \cdots & \sin^2{\frac{2m\pi}{2m+1}} \\
\vdots & \vdots & \ddots & \vdots \\
\sin^2{\frac{m\pi}{2m+1}} & \sin^2{\frac{2m\pi}{2m+1}} & \cdots & \sin^2{\frac{m^2\pi}{2m+1}} 
\end{bmatrix}
\begin{bmatrix}
    c_{1} \\ c_{2} \\ \vdots \\ c_{m}
\end{bmatrix}
=\begin{bmatrix}
    \lambda_{1} \\ \lambda_{2} \\ \vdots \\ \lambda_{m} 
\end{bmatrix}. 
\] 

Now let us prove the claim. For each $k=1,2,...,m$, let 
$\mathbf{v}_{k}$ be the $k$th column of the matrix $Z$.
We need to show that $\mathbf{v}_{k}$'s are linearly independent. Since 
\[ \mathbf{v}_{2}=4\mathbf{v}_{1}-\begin{bmatrix}
    \sin^{4}{\frac{\pi}{2m+1}} \\ 
    \sin^{4}{\frac{2\pi}{2m+1}} \\ 
    \vdots \\ 
    \sin^{4}{\frac{m\pi}{2m+1}}
\end{bmatrix}, \]
$\mathbf{v}_{2}$ can be replaced by $\mathbf{v}'_{2}=(\sin^4{\frac{\pi}{2m+1}},\;\sin^4{\frac{2\pi}{2m+1}},...,\;\sin^4{\frac{m\pi}{2m+1}})^T$. \\
Similarly, since 
\[ 
\mathbf{v}_{3}=9\mathbf{v}_{1}-24\mathbf{v}'_{2}+16\begin{bmatrix}
    \sin^{6}{\frac{\pi}{2m+1}} \\ 
    \sin^{6}{\frac{2\pi}{2m+1}} \\ 
    \vdots \\ 
    \sin^{6}{\frac{m\pi}{2m+1}}
\end{bmatrix}, 
\]
the vector $\mathbf{v}_{3}$ can be replaced by $\mathbf{v}'_{3}=(\sin^6{\frac{\pi}{2m+1}},\;\sin^6{\frac{2\pi}{2m+1}},...,\;\sin^6{\frac{m\pi}{2m+1}})^T$.\\
In this way, we obtain 
\[
Span\{\mathbf{v}_{1},\mathbf{v}_{2},...,\mathbf{v}_{m}\} = Span\{\mathbf{v}_{1},\mathbf{v}'_{2},...,\mathbf{v}'_{m}\}, \]
 where 
\[\mathbf{v}'_{k}=(\sin^{2k}{\frac{\pi}{2m+1}},\;\sin^{2k}{\frac{2\pi}{2m+1}},...,\;\sin^{2k}{\frac{m\pi}{2m+1}})^T.\]
Suppose that $\sum_{k=1}^{m}d_{k}\mathbf{v}_{k}=0$ for some scalar $d_{k}$'s.
Then 
\begin{align*}
    d_{1}\sin^{2}{\frac{\pi}{2m+1}}+d_{2}\sin^{2}{\frac{2\pi}{2m+1}}+\cdots+d_{m}\sin^{2}{\frac{m\pi}{2m+1}}&=0. \\
    d_{1}\sin^{4}{\frac{\pi}{2m+1}}+d_{2}\sin^{4}{\frac{2\pi}{2m+1}}+\cdots+d_{m}\sin^{4}{\frac{m\pi}{2m+1}}&=0. \\ 
    \vdots \qquad\qquad\qquad\qquad\qquad\qquad\\ 
     d_{1}\sin^{2m}{\frac{\pi}{2m+1}}+d_{2}\sin^{2m}{\frac{2\pi}{2m+1}}+\cdots+d_{m}\sin^{2m}{\frac{m\pi}{2m+1}}&=0. 
\end{align*}
This means $d_{1}x+d_{2}x^{2}+\cdots+d_{m}x^{m}$ has $m+1$ different real roots: 
\[
0, \;\sin^{2}{\frac{\pi}{2m+1}}, \;\sin^{2}{\frac{2\pi}{2m+1}},...,\;\sin^{2}{\frac{m\pi}{2m+1}}.
\]
Thus $d_{1}=d_{2}=\cdots=d_{m}=0$ as we desired.

(2) Now let $n=2m$. The proof of this case can be 
        obtained easily from the invertiblity of the following matrix, which can be shown using similar arguments given in the previous case. 
\[
Z' = \begin{bmatrix}
\sin^2{\frac{\pi}{2m}} & \sin^2{\frac{2\pi}{2m}} & \cdots & \sin^2{\frac{m\pi}{2m}} \\
\sin^2{\frac{2\pi}{2m}} & \sin^2{\frac{4\pi}{2m}} & \cdots & \sin^2{\frac{2m\pi}{2m}} \\
\vdots & \vdots & \ddots & \vdots \\
\sin^2{\frac{m\pi}{2m}} & \sin^2{\frac{2m\pi}{2m}} & \cdots & \sin^2{\frac{m^2\pi}{2m}} 
\end{bmatrix}.
\]
\end{proof}

By applying the Cauchy interlacing theorem, we obtain the following result.

\begin{corollary}
Given $\Lambda=\{\lambda_1, \lambda_2,...,\lambda_m\}$, there is a weighted graph $W$ with $2m$ vertices such that $\Lambda \subset spec(L_W)$.
\end{corollary}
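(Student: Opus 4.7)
My plan is to deduce the corollary directly from Theorem~\ref{existence of signed graphs}(2). After relabeling so that $\lambda_1 \leq \lambda_2 \leq \cdots \leq \lambda_m$, I apply Theorem~\ref{existence of signed graphs}(2) with the first $m-1$ values serving as the prescribed doubled eigenvalues and with the extra parameter chosen as $\lambda \colonequals \lambda_m$. The conclusion is a weighted graph $W$ on $2m$ vertices whose Laplacian spectrum is the multiset $\{\lambda_1, \lambda_1, \ldots, \lambda_{m-1}, \lambda_{m-1}, 0, \lambda_m\}$, and this already contains $\Lambda$ as a submultiset, finishing the proof.

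The Cauchy interlacing viewpoint advertised in the lead-in to the corollary becomes visible if one instead thinks of the $2m$-vertex graph as a principal-submatrix ``shadow'' of the $(2m+1)$-vertex graph $W^*$ produced by Theorem~\ref{existence of signed graphs}(1), whose Laplacian spectrum is $\{0, \lambda_1, \lambda_1, \ldots, \lambda_m, \lambda_m\}$. For any $2m \times 2m$ principal submatrix $B$ of $L_{W^*}$, Cauchy interlacing yields $\nu_j \leq \mu_j \leq \nu_{j+1}$; since the eigenvalues $\nu_j$ of $L_{W^*}$ occur in consecutive equal pairs, this pins the middle eigenvalue $\mu_{2k}$ of $B$ to equal $\lambda_k$ for each $k = 1, \ldots, m$. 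In this picture, any such $B$ already has spectrum containing $\Lambda$, which is morally the statement being claimed.

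The main obstacle with turning this interlacing observation into an \emph{actual} Laplacian realization is that a principal submatrix of a graph Laplacian is typically not itself a graph Laplacian: its row sums equal $-(L_{W^*})_{iv}$, which are nonzero unless the deleted vertex was already isolated in $W^*$, and the construction of Theorem~\ref{existence of signed graphs}(1) does not produce isolated vertices for generic $\lambda_k$. Realizing $B$ as $L_W$ for some honest weighted graph $W$ would thus require a diagonal correction that in general perturbs eigenvalues. Theorem~\ref{existence of signed graphs}(2) sidesteps this obstacle by constructing the graph directly on $n = 2m$ vertices as a linear combination of the circulant matrices $A_1, \ldots, A_m$; the invertibility of the matrix $Z'$ in that proof supplies edge weights whose Laplacian genuinely has the required spectrum and genuinely has row sums zero. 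For this reason I would record the direct appeal to Theorem~\ref{existence of signed graphs}(2) with $\lambda \colonequals \lambda_m$ as the main argument, and the Cauchy interlacing discussion as the conceptual commentary that explains why only $m$ of the $2m$ eigenvalues can be freely prescribed in this framework.
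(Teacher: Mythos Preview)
Your main argument---relabel so that $\lambda_1\le\cdots\le\lambda_m$ and apply Theorem~\ref{existence of signed graphs}(2) with $\lambda\colonequals\lambda_m$---is correct and yields exactly the desired conclusion. This is, however, \emph{not} the route the paper takes. The paper instead invokes part~(1) of Theorem~\ref{existence of signed graphs} to build a weighted graph $W_1$ on $2m+1$ vertices with Laplacian spectrum $\{0,\lambda_1,\lambda_1,\ldots,\lambda_m,\lambda_m\}$, deletes the vertex $v_{2m+1}$ to obtain $W$, and then appeals to Cauchy interlacing to force each $\lambda_j$ to coincide with some eigenvalue $\mu_{2j-1}$ or $\mu_{2j}$ of $L_W$.

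Your approach is more direct and, in fact, sidesteps a genuine soft spot in the paper's argument that you yourself flagged: after deleting $v_{2m+1}$, the Laplacian $L_W$ of the induced subgraph is \emph{not} the principal $2m\times 2m$ submatrix of $L_{W_1}$ (the diagonal entries differ by the weights of the edges incident to $v_{2m+1}$), so the Cauchy interlacing theorem does not apply to $L_W$ versus $L_{W_1}$ without further justification. The paper glosses over this point, whereas your direct appeal to part~(2) never needs it. What the paper's approach buys, conceptually, is the picture you sketched in your second paragraph---the double eigenvalues of $L_{W_1}$ pinning the interlaced values---but as a rigorous proof your version via part~(2) is both shorter and cleaner.
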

\begin{proof}
    Let $W$ be a weighted graph obtained by deleting a vertex $v_{2m+1}$ from $W_1$ in Theorem \ref{existence of signed graphs}. 
    Let $\mu_{1} \leq \mu_{2} \leq ... \leq \mu_{2m}$ be eigenvalues of $L_{W}$.
    Now suppose that $\lambda_{i}\leq 0 \leq \lambda_{i+1}$.
    Then, by the Cauchy interlacing theorem, 
    \[ \lambda_{j}= 
    \begin{cases}
              \mu_{2j-1} &\text{  for  } 1\leq j \leq i \\
              \mu_{2j} &\text{  for  } i< j \leq m.
            \end{cases}\]   
\end{proof}

 \bigskip

\section{$2$-Laplacians of simple graphs} 
  \label{sec-$2$-Laplacian of simple graphs}

In this section, we focus on the $2$-Laplacian. 
We compute explicitly the eigenvalues and eigenvectors of the $2$-Laplacians of some families of graphs, thereby comparing the properties of the $2$-Laplacian and the ordinary Laplacian of a simple graph.
 
\subsection{Definition and basic properties}

Let us begin by restating the definition of the $2$-Laplacian of a simple graph. 
 
\begin{remark} \label{2-lap}
Let $G$ be a simple graph with vertices $v_1, ..., v_n$. The \underline{$2$-Laplacian} of $G$ in Definition \ref{m-lap} with $m=2$ is the same as
\[
D' - \frac{1}{12}\left(16A_{G}-A_{G}^2 + D_{G}\right)
\]
where $D'$ is the diagonal matrix whose $(i,i)$ 
entry is the sum of all entries in the $i$th row of the matrix
\[
A':=\frac{1}{12}\left(16A_{G}-A_{G}^2 + D_{G}\right).
\]
\end{remark}

Note that the adjacency matrix of $G_2$ is $A'$. 
For the remainder of this paper, we will simply refer to $G_2$ as $G'$.

\medskip

We first investigate the $2$-Laplacians of regular graphs.

\begin{proposition} 
Let $G$ be a $k$-regular graph with $n$ vertices. 
If $\gamma_1 \geq \gamma_2 \geq \cdots \geq \gamma_n$ 
are the eigenvalues of its adjacency matrix $A_G$, 
then the eigenvalues of $L^{(2)}_G$ are
\[ 
\lambda_i = \frac{1}{12}\left( \gamma_i^2 -16 \gamma_i +16k-k^2 \right) 
\]
for $1 \leq i \leq n$.
\end{proposition}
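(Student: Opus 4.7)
The plan is to reduce $L^{(2)}_G$ for a $k$-regular graph to a polynomial in the adjacency matrix $A_G$, after which the eigenvalue formula reads off immediately by the spectral mapping theorem for polynomials.

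First I would use the explicit description from Remark \ref{2-lap}: $L^{(2)}_G = D' - A'$ where $A' = \frac{1}{12}(16 A_G - A_G^2 + D_G)$ and $D'$ is the diagonal matrix of row sums of $A'$. For a $k$-regular graph we have $D_G = k I_n$ and the all-ones vector $\mathbf{1}$ is an eigenvector of $A_G$ with eigenvalue $k$, so $A_G^2 \mathbf{1} = k^2 \mathbf{1}$. Hence every row of $A'$ has the same sum
\[
\frac{1}{12}\bigl(16k - k^2 + k\bigr) = \frac{17k - k^2}{12},
\]
which gives $D' = \frac{17k - k^2}{12}\, I_n$.

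Substituting back and collecting terms yields
\[
L^{(2)}_G = \frac{17k-k^2}{12}\, I_n - \frac{1}{12}\bigl(16 A_G - A_G^2 + k I_n\bigr) = \frac{1}{12}\bigl(A_G^2 - 16 A_G + (16k - k^2) I_n\bigr),
\]
i.e., $L^{(2)}_G = p(A_G)$ for the polynomial $p(t) = \frac{1}{12}(t^2 - 16t + 16k - k^2)$. Since $A_G$ is real symmetric, any orthonormal eigenbasis $\{\mathbf{x}_i\}$ with $A_G \mathbf{x}_i = \gamma_i \mathbf{x}_i$ simultaneously diagonalizes $p(A_G)$, and the corresponding eigenvalue of $L^{(2)}_G$ is $p(\gamma_i) = \frac{1}{12}(\gamma_i^2 - 16 \gamma_i + 16k - k^2)$, as claimed.

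The only conceptual step beyond bookkeeping is the computation of $D'$: one needs the diagonal correction $D_G$ inside $A'$ to interact with $A_G^2$ in such a way that $D'$ becomes a scalar multiple of $I_n$. This is precisely where $k$-regularity is essential — without it, the row sums of $A_G^2$ would vary from vertex to vertex and $L^{(2)}_G$ would no longer be a polynomial in $A_G$ alone.
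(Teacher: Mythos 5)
Your proposal is correct and follows essentially the same route as the paper: both reduce $L^{(2)}_G$ for a $k$-regular graph to the polynomial $\frac{1}{12}\left(A_G^2 - 16A_G + (16k-k^2)I_n\right)$ in $A_G$ and read off the eigenvalues by spectral mapping. Your write-up just makes explicit the computation of $D'$ (via $A_G^2\mathbf{1}=k^2\mathbf{1}$ and the cancellation of the $kI_n$ terms) that the paper compresses into a single displayed identity.
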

\begin{proof}
It follows directly from
    \begin{align*}
        L^{(2)}_G &= L_{G'} \\
        &= \frac{1}{12}\left\{ 16kI_n - 16A_G - (k^2I_n - A_G^2) \right\} \\
        &= \frac{1}{12}\left\{ (16k-k^2)I_n - 16A_G + A_G^2 \right\}. 
    \end{align*}
\end{proof}

We note that the eigenvalues of $L_G^{(2)}$ can be negative. Thus, unlike the ordinary Laplacian, the $2$-Laplacian is not postive semi-definite in general. See Corollary \ref{complete graph n=18}. 

\begin{corollary} \label{positive semidefinity}
If $G$ is a $k$-regular graph with $k \leq 8$ 
then the $2$-Laplacian $L^{(2)}_G$ is positive semi-definite.
\end{corollary}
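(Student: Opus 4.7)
The plan is to reduce everything to a single one-variable inequality by substituting the formula for the eigenvalues of $L^{(2)}_G$ from the preceding proposition. Writing $f(\gamma) = \gamma^2 - 16\gamma + 16k - k^2$, the eigenvalues of $L^{(2)}_G$ are $\lambda_i = f(\gamma_i)/12$, so positive semi-definiteness is equivalent to $f(\gamma_i) \geq 0$ for every eigenvalue $\gamma_i$ of $A_G$.

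Next I would use the standard fact that for a $k$-regular graph, every eigenvalue $\gamma_i$ of $A_G$ lies in $[-k, k]$ (with $\gamma_1 = k$ attained). Thus the question becomes: is $f(\gamma) \geq 0$ for all $\gamma \in [-k, k]$ when $k \leq 8$? The quadratic $f$ opens upward with vertex at $\gamma = 8$, hence $f$ is strictly decreasing on $(-\infty, 8]$. When $k \leq 8$ the interval $[-k, k]$ lies entirely in $(-\infty, 8]$, so $f$ attains its minimum on $[-k, k]$ at the right endpoint $\gamma = k$.

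The main calculation is then the one-line evaluation
\[
f(k) = k^2 - 16k + 16k - k^2 = 0,
\]
which gives $f(\gamma) \geq f(k) = 0$ for every $\gamma \in [-k, k]$, and therefore $\lambda_i \geq 0$ for all $i$. This establishes positive semi-definiteness.

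I do not anticipate any real obstacle: the proof is essentially a direct application of the eigenvalue formula from the previous proposition together with the bound $|\gamma_i| \leq k$ for regular graphs. The only subtlety worth flagging is that the hypothesis $k \leq 8$ is sharp, since for $k > 8$ the vertex $\gamma = 8$ of the parabola sits inside $[-k,k]$ and one gets $f(8) = -(k-8)^2 < 0$; this is exactly why the bound cannot be relaxed and foreshadows the negative eigenvalue appearing in Corollary \ref{complete graph n=18}.
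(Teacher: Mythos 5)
Your argument is correct and is exactly the intended deduction: the paper states this corollary without a separate proof, as an immediate consequence of the eigenvalue formula $\lambda_i=\frac{1}{12}\left(\gamma_i^2-16\gamma_i+16k-k^2\right)$ together with the standard bound $|\gamma_i|\le k$ for $k$-regular graphs, which is precisely the route you take. The only loose point is your closing aside on sharpness: $f(8)<0$ by itself does not exhibit a $9$-regular graph having an adjacency eigenvalue near $8$, so it does not actually show the bound $k\le 8$ cannot be relaxed, but that remark is not part of the proof of the stated corollary.
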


\bigskip

Next, we want to compute the $2$-Laplacians under standard graph operations. Let us recall definitions first.
Given two graphs $G$ and $H$ with vertex sets $V(G)$ and $V(H)$, the \underline{Cartesian product} $G \Box H$ of $G$ and $H$ is the graph with the vertex set $V(G) \times V(H)$ such that two vertices $(v,w)$ and $(v',w')$ are adjacent if $v = v'$ and $w$ is adjacent to $w'$ in $H$, or if $w = w'$ and $v$ is adjacent to $v'$ in $G$. 
The \underline{tensor product} $G \times H$ of $G$ and $H$ is the graph with the vertex set $V(G) \times V(H)$ such that two vertices $(v,w)$ and $(v',w')$ are adjacent if $v$ is adjacent to $v'$ in $G$ and $w$ is adjacent to $w'$ in $H$.

\begin{proposition}
 Let $G \Box H$ be the Cartesian product of two graphs $G$ and $H$ with $n_{1}$ and $n_{2}$ vertices, respectively. Then,
 \[L^{(2)}_{G \Box H}=L^{(2)}_G \otimes I_{n_2}+I_{n_1}\otimes L^{(2)}_H-\frac{1}{6}L_{G\times H}, \]
where $G\times H$ is the tensor product of two graphs $G$ and $H$.
\end{proposition}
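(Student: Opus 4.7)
The plan is a direct Kronecker-product computation.

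First, I would collect the standard product identities $A_{G \Box H} = A_G \otimes I_{n_2} + I_{n_1} \otimes A_H$, $D_{G \Box H} = D_G \otimes I_{n_2} + I_{n_1} \otimes D_H$, $A_{G \times H} = A_G \otimes A_H$, and $D_{G \times H} = D_G \otimes D_H$. Using the mixed-product property $(A_G \otimes I_{n_2})(I_{n_1} \otimes A_H) = A_G \otimes A_H$, squaring the first identity gives
\begin{equation*}
A_{G \Box H}^2 = A_G^2 \otimes I_{n_2} + 2\, A_G \otimes A_H + I_{n_1} \otimes A_H^2.
\end{equation*}

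Next, by Remark \ref{2-lap} the weighted adjacency matrix of $(G \Box H)'$ is $\tfrac{1}{12}(16\, A_{G \Box H} - A_{G \Box H}^2 + D_{G \Box H})$. Substituting the identities above and regrouping, the $G$- and $H$-only pieces $16A - A^2 + D$ separate cleanly into two tensor factors, leaving exactly the cross term $-2\, A_G \otimes A_H$, and I would obtain
\begin{equation*}
A_{(G \Box H)'} \;=\; A_{G'} \otimes I_{n_2} + I_{n_1} \otimes A_{H'} - \tfrac{1}{6}\, A_{G \times H}.
\end{equation*}

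Then I would perform the analogous computation for the degree matrices. Writing $R(M)$ for the diagonal matrix of row sums of $M$, the map $R$ is linear and satisfies $R(A \otimes B) = R(A) \otimes R(B)$, which follows from the factorization $\sum_{j,l}(A\otimes B)_{(i,k),(j,l)} = \bigl(\sum_j A_{ij}\bigr)\bigl(\sum_l B_{kl}\bigr)$. Applying $R$ to the displayed formula for $A_{(G\Box H)'}$ and using $D_G \otimes D_H = D_{G \times H}$ yields
\begin{equation*}
D_{(G \Box H)'} \;=\; D_{G'} \otimes I_{n_2} + I_{n_1} \otimes D_{H'} - \tfrac{1}{6}\, D_{G \times H}.
\end{equation*}

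Finally, subtracting $A_{(G\Box H)'}$ from $D_{(G\Box H)'}$ term by term and recalling $L^{(2)}_G = L_{G'}$ produces the desired identity. There is no genuine obstacle; the argument is routine Kronecker bookkeeping. The only point that warrants a moment of care is the row-sum identity $R(A \otimes B) = R(A) \otimes R(B)$, since it is what bridges the adjacency decomposition and the degree decomposition and ensures the $-\tfrac{1}{6}$ cross term assembles correctly into $L_{G \times H}$ rather than into some spurious combination.
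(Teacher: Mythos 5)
Your proof is correct and follows essentially the same route as the paper: expand $A_{(G\Box H)'}=\tfrac{1}{12}(16A_{G\Box H}-A_{G\Box H}^2+D_{G\Box H})$ via the Kronecker identities for the Cartesian product, identify the cross term $-\tfrac{1}{6}A_G\otimes A_H=-\tfrac{1}{6}A_{G\times H}$, and pass to Laplacians. In fact you are slightly more careful than the paper, which omits the row-sum/degree-matrix step $R(A\otimes B)=R(A)\otimes R(B)$ that you rightly flag as the point needing verification.
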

\begin{proof}
    Let $K=G \Box H$, then
    \begin{align*}
        A_{K'}&=\frac{4}{3}A_K-\frac{1}{12}\left(A_K^2-D_K\right)\\
        &=\frac{4}{3}\left(A_G\otimes I_{n_2}+I_{n_1}\otimes A_H\right)\\
        &-\frac{1}{12}\left(A_G^2\otimes I_{n_2}+I_{n_1}\otimes A_H^2+2A_G\otimes A_H-D_K\right).
    \end{align*}
    Since $A_{G \times H}=A_G\otimes A_H$,
    \begin{align*}
        L^{(2)}_K&=L_{K'} \\
        &=L^{(2)}_G \otimes I_{n_2}+I_{n_1}\otimes L^{(2)}_H-\frac{1}{6}L_{G\times H}.
    \end{align*}
\end{proof}

Recall that 
\[ L_{G \Box H}=L_G \otimes I_{n_2}+I_{n_1}\otimes L_H. \]
See \cite{Fiedler}.
We can see that the $2$-Laplacian version is similar to this except for just one term.

\medskip

The following proposition shows that the $2$-Laplacian of the complement graph can be expressed in terms of the $2$-Laplacian and ordinary Laplacian of the given graph.
It is known that
\[ L_{G}=-L_{\Bar{G}}+nI_n -J_n, \]
where $J_{n}$ is a $n \times n$ matrix with all entries $1$.

\begin{proposition}
Let $G$ be a simple graph with vertices $v_1, ..., v_n$, and $d_{i}$ be the degree of $v_i$. If $\Bar{G}$ is the  complement graph of $G$ then
\[
L^{(2)}_{\Bar{G}}=L^{(2)}_G-\frac{17}{6}L_G+\frac{18-n}{12}L_{K_{n}}+\frac{1}{12} M
\]
where $M$ is a $n \times n$ matrix whose $(i,j)$ entry is $d_{i}+d_{j}$.
\end{proposition}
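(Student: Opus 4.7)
The plan is to reduce both sides to explicit linear combinations of the atomic matrices $I$, $J$, $A_G$, $A_G^2$, $D_G$, $M$ and then match coefficients. The central tool is the construction $\mathcal{L} \colon X \mapsto \mathrm{Diag}(X \mathbf{1}) - X$, which is linear, reduces to the ordinary weighted Laplacian on zero-diagonal symmetric matrices, and vanishes on diagonal matrices.

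First I would use the standard complement identities $A_{\bar G} = J - I - A_G$ and $D_{\bar G} = (n-1) I - D_G$, together with the expansion
\[
A_{\bar G}^{\,2} \;=\; (n-2) J + I + A_G^2 + 2 A_G - (J A_G + A_G J).
\]
A direct computation gives $(J A_G)_{ij} = d_j$ and $(A_G J)_{ij} = d_i$, so the combination $J A_G + A_G J$ is exactly the matrix $M$ in the statement. This is the crucial algebraic fact that produces the $M$-term on the right-hand side.

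Plugging the above into Remark \ref{2-lap} yields
\[
12\, A_{\bar G'} \;=\; (18-n)(J - I) - 18 A_G - A_G^{\,2} + M - D_G,
\]
and applying $\mathcal{L}$ gives
\[
12\, L^{(2)}_{\bar G} \;=\; (18-n) L_{K_n} - 18 L_G - \mathcal{L}(A_G^2) + \mathcal{L}(M),
\]
since $\mathcal{L}(J - I) = L_{K_n}$, $\mathcal{L}(A_G) = L_G$, and $\mathcal{L}(D_G) = 0$. Applying $\mathcal{L}$ to Remark \ref{2-lap} for $G$ itself yields $12\, L^{(2)}_G = 16 L_G - \mathcal{L}(A_G^2)$, and eliminating $\mathcal{L}(A_G^2)$ between these two identities collapses the $A_G^2$ contribution and leaves the claimed linear combination of $L^{(2)}_G$, $L_G$, $L_{K_n}$, and the contribution attached to $M$.

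The main obstacle is the diagonal bookkeeping: the matrices $A_G^2$ and $M$ have nontrivial diagonals, so applying $\mathcal{L}$ to them requires careful tracking of the resulting diagonal entries and row-sums, in order to confirm that the $\mathcal{L}(M)$ produced by the construction matches the $\tfrac{1}{12} M$ term as intended in the statement. Once the identity $J A_G + A_G J = M$ has been noted, however, the remainder of the argument is a direct coefficient-matching exercise.
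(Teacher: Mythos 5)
Your route is the same as the paper's: expand $A_{\bar G}^{2}$ via the complement identities, recognize the cross term $JA_G+A_GJ$ as the matrix $M$, and push everything through the linear map $\mathcal{L}\colon X\mapsto \mathrm{Diag}(X\mathbf 1)-X$. Your intermediate identity $12A_{\bar G'}=(18-n)(J-I)-18A_G-A_G^{2}+M-D_G$ is correct, and so is the elimination of $\mathcal{L}(A_G^{2})$ via $12L^{(2)}_G=16L_G-\mathcal{L}(A_G^{2})$.

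However, the ``diagonal bookkeeping'' you defer is not a formality; it is exactly where the argument and the printed statement part ways. Your computation delivers the term $\tfrac1{12}\mathcal{L}(M)$ with
\[
\mathcal{L}(M)=\mathrm{Diag}(M\mathbf 1)-M=nD_G+2|E|\,I_n-M,
\]
where $|E|$ is the number of edges of $G$, and this is not equal to $M$: its off-diagonal entries are $-(d_i+d_j)$, the opposite sign of those of $M$, and it carries a nontrivial diagonal. So what your argument actually proves is
\[
L^{(2)}_{\bar G}=L^{(2)}_G-\tfrac{17}{6}L_G+\tfrac{18-n}{12}L_{K_n}+\tfrac1{12}\bigl(nD_G+2|E|\,I_n-M\bigr),
\]
not the identity in the statement. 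Indeed the statement cannot hold as written: every term on both sides other than $\tfrac1{12}M$ annihilates the all-ones vector, whereas $M\mathbf 1$ has $i$th entry $nd_i+2|E|$, which is nonzero for any nonempty $G$; a direct check on $G=P_3$ shows the $(1,2)$ entries of the two sides differ by $\tfrac12$. The paper's own proof is terse at precisely this step---it passes from an identity among adjacency-type matrices to one among Laplacians without tracking the induced degree matrices---so your version, carried to completion, is the more reliable one; it simply establishes a corrected form of the proposition rather than the printed one.
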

\begin{proof}
    Note that
    \begin{align*}
        A_{\Bar{G}}^2&=(A_{K_{n}}-A_G)^2\\
        &=A_{K_{n}}^2-A_{K_{n}}A_G-A_G A_{K_{n}}+A_G^2\\
        &=nI+(n-2)A_{K_{n}}-(A_{K_{n}}A_G+A_G A_{K_{n}})+A_G^2.
    \end{align*}
    Since $A_{K_{n}}A_G+A_G A_{K_{n}}$ and $M -2A_G$ differ only in diagonal entries, we obtain 
    \[
    L^{(2)}_{\Bar{G}}=L^{(2)}_G-\frac{17}{6}L_G+\frac{18-n}{12}L_{K_{n}}+\frac{1}{12} M 
    \]
 where $M_{i,j}= d_{i}+d_{j}$.
\end{proof}

\subsection{Laplacian and $2$-Laplacian cospectral mates} 

For all graphs $G$ with no more than $9$ vertices, we computed both the number and the proportion of graphs with cospectral mates for $L^{(2)}_G$ via SageMath \cite{sagemath} and listed in Table \ref{tab-cospectral number} and Table \ref{ratio of cospectral}, respectively. 
We also listed the number of such graphs for the adjacency matrix $A_G$ and the Laplacian matrix $L_G$ from \cite{BC} and signless Laplacian $|L_G|$ from \cite{H cospectral}.
It is noteworthy that the number of cospectral graphs with respect to $L^{(2)}_G$ is significantly smaller than 
all others.

\begin{table} 
\begin{center}
\begin{tabular}{|c|c|c|c|c|c|}
\hline
\hspace*{1.5mm}$|V|$ \hspace*{1.5mm}& \hspace*{1.5mm} $G$ \hspace*{1.5mm} & \hspace*{1.5mm} $A$ \hspace*{1.5mm}& \hspace*{1.5mm} $L$ \hspace*{1.5mm} & \hspace*{1.5mm} $|L|$\hspace*{1.5mm} & \hspace*{1.45mm} $L^{(2)}$ \hspace*{1.45mm} \\
\hline
1  &  1 &    0  &   0   &  0 &   0         \\
2  &  2 &    0  &   0   &  0 &   0         \\
3  &  4 &    0  &   0   &  0  &  0         \\
4  &  11 &   0   &  0    & 0.18181  &  0          \\
5  &  34 &   0.05882   &  0    &  0.11765  &   0        \\
6  &  156 &  0.06410    &   0.02564   &  0.10256  &   0        \\
7  &  1044 &  0.10536    &   0.12452   & 0.09770 &   0.00192          \\
8  &  12346 &  0.13948    &   0.14312   & 0.09728  & 0.00016           \\
9  &  274668 &  0.18582    &   0.15508   &  0.06918 & 0.00001          \\
\hline
\end{tabular}
\end{center}
\caption{Proportions of graphs with cospectral mates}
\label{ratio of cospectral}
\end{table}

As the value of $n$ increases up to $9$, we observe a decrease in the fraction of graphs with cospectral mates. 
This observation suggests that it may be more effective to analyze the structure of the graph using $2$-Laplacian spectra instead of the others. 

\medskip

We first give an example of a pair of graphs that are cospectral with respect to $L_{G}$ but 
not with respect to $L^{(2)}_{G}$.

\begin{example} \label{bip example}
The following two graphs $G$ and $H$ are cospectral 
with respect to the ordinary Laplacians. See \cite{VW}. 
\[
\begin{tikzpicture}[x=.6cm, y=1cm]
	\vertex (1) at (3,1) [label=above:$1$] {};
	\vertex (2) at (2,0) [label=left:$2$] {};
	\vertex (3) at (4,0) [label=right:$3$] {};
	\vertex (4) at (0,0) [label=left:$4$] {};
    \vertex (5) at (3,-1) [label=below:$5$] {};
    \vertex (6) at (6,0) [label=right:$6$] {};
	\path
		(1) edge (2)
		(1) edge (3)
		(1) edge (4)
  (1) edge (6)
  (2) edge (3)
  (4) edge (5)
  (5) edge (6)
	;
\end{tikzpicture} \qquad \qquad 
\begin{tikzpicture}[x=.6cm, y=1cm]
	\vertex (1) at (3,1) [label=above:$1$] {};
	\vertex (2) at (2,0) [label=left:$2$] {};
	\vertex (3) at (4,0) [label=right:$3$] {};
	\vertex (4) at (0,0) [label=left:$4$] {};
    \vertex (5) at (3,-1) [label=below:$5$] {};
    \vertex (6) at (6,0) [label=right:$6$] {};
	\path
		(1) edge (2)
		(1) edge (4)
  (1) edge (6)
  (2) edge (3)
  (2) edge (5)
  (4) edge (5)
  (5) edge (6)
	;
\end{tikzpicture}\]
On the other hand, their $2$-Laplacians are 
\[
 L^{(2)}_G=
\frac{1}{12}\begin{bmatrix}
60 & -15 & -15 & -16  & 2 & -16 \\
-15 & 28 & -15 & 1  & 0 & 1 \\
-15 & -15  & 28 & 1 & 0 &  1 \\
-16 & 1 & 1 & 28 & -16 & 2 \\
2 & 0 & 0 & -16  & 30 & -16 \\
-16  & 1 & 1 & 2 & -16 & 28
\end{bmatrix},\]
\[
 L^{(2)}_H=
\frac{1}{12}\begin{bmatrix}
44 & -16 & 1 & -16  & 3 & -16 \\
-16 & 44 & -16 & 2  & -16 & 2 \\
1 & -16  & 14 & 0 & 1 &  0 \\
-16 & 2 & 0 & 28 & -16 & 2 \\
3 & -16 & 1 & -16  & 44 & -16 \\
-16  & 2 & 0 & 2 & -16 & 28
\end{bmatrix}
\]
and they have different spectra: 
\begin{align*}
    spec(L^{(2)}_G)&= \left\{0, \frac{7}{10}, \frac{13}{6}, \frac{43}{12}, \frac{437}{120}, \frac{27}{4}\right\},\\
    spec(L^{(2)}_H)&=\left\{0, \frac{3}{4}, \frac{13}{6}, \frac{41}{12}, \frac{71}{20}, \frac{833}{120}\right\}.
\end{align*}
\end{example}

Note that the first graph in Example \ref{bip example} is not bipartite, while the second one is bipartite.
It is well known that the spectrum of the ordinary Laplacian does not determine whether a graph is bipartite or not. See, for example, \cite{B const of cospectral} and \cite{C spectra}. 
However, based on the above example, the $2$-Laplacian spectrum may provide some insight into this determination.

\medskip

As indicated in Table \ref{tab-cospectral number}, however, not all graphs are distinguished by their $L^{(2)}$-spectra. Here is an example of a $2$-Laplacian cospectral pair.

\begin{example} The following two graphs are cospectral with respect to the $2$-Laplacians. One can show that 
they are also cospectral with respect to the ordinary Laplacians as well.
\[
\begin{tikzpicture}[x=.6cm, y=1cm]
	\vertex (1) at (0,0)  {};
	\vertex (2) at (0,1)  {};
	\vertex (3) at (-1,0) {};
	\vertex (4) at (1,0)  {};
    \vertex (5) at (3,1)  {};
    \vertex (6) at (2,0)  {};
    \vertex (7) at (4,0)  {};

	\path
		(1) edge (2)
		(1) edge (3)
		(1) edge (4)
  (5) edge (6)
  (5) edge (7)
  (6) edge (7)
	;
\end{tikzpicture} \qquad \qquad \qquad 
\begin{tikzpicture}[x=.6cm, y=1cm]
	\vertex (1) at (4,0)  {};
	\vertex (2) at (3,1)  {};
	\vertex (3) at (1.5,1) {};
	\vertex (4) at (0.5,0)  {};
    \vertex (5) at (1.5,-1)  {};
    \vertex (6) at (3,-1)  {};
    \vertex (7) at (5,0)  {};

	\path
		(1) edge (2)
		(2) edge (3)
		(3) edge (4)
        (4) edge (5)
		(5) edge (6)
		(6) edge (1)
	;
\end{tikzpicture}
\]
\end{example}

\medskip

\medskip

\subsection{$2$-Laplician spectra of circulant graphs} 

In the rest of this section, we compute the $2$-Laplacian spectra of circulant graphs.
This allows us to illustrate the similarities and differences between the $2$-Laplacian and ordinary Laplacian spectra.

\begin{notation} 
To define graphs of circulant types, let us fix some notation first.

\begin{enumerate}
\item All our matrices and vectors are defined over real numbers.
\item $n$-vectors are $n \times 1$ matrices.
\item For a $n \times n$ matrix $M$, we let $(M)_{ij}$ denote the $(i,j)$ entry 
of $M$ for $0 \leq i , j \leq n-1$. 
\item For a $n$-vector $x$, we let $(x)_k$ or $x_k$ denote the $k$th entry 
of $x$ for $0 \leq k < n$.
The indices $k$ for the entries of $x$ will be regarded as elements of $\mathbb{Z}_n$, 
thus 
\[
x_{n+k} = x_k \quad \text{for all $k \in \mathbb{Z}$}.
\]
\end{enumerate}
\end{notation}

For every column vector 
\[
x=\begin{bmatrix} x_0 \\ x_1 \\ \vdots \\ x_{n-1} \end{bmatrix} 
\]
we can define the {circulant matrix} generated by $x$ as 
\[
C_{x} = \begin{bmatrix}
x_{0} & x_{n-1} & \cdots & x_{1} \\
x_{1} & x_{0} & \cdots & x_{2} \\
\vdots & \vdots & \ddots & \vdots \\
x_{n-1} & x_{n-2} & \cdots & x_{0}
\end{bmatrix}.
\]

The following results are well known. 
See, for example, \cite{davis1979circulant} and \cite{gray2006toeplitz}.
\begin{lemma}
Let $X$ and $Y$ be $n \times n$ circulant matrices. Then 
$X+Y$ and $XY$ are circulant, and $XY=YX$. If  
$X$ is invertible then its inverse $X^{-1}$ is also circulant.
\end{lemma}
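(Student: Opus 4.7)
The plan is to realize the algebra of circulant matrices as the polynomial algebra generated by the cyclic shift matrix, and then derive all four claims from this single identification.

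First I would introduce the cyclic shift matrix $P \colonequals C_{e_1}$, where $e_1 = (0,1,0,\ldots,0)^T$. A direct inspection of the definition shows that $P^k$ is the circulant matrix generated by $e_k$ (interpreting indices mod $n$), so that
\[
C_x = \sum_{k=0}^{n-1} x_k P^k
\]
for every $n$-vector $x$. Since $P^n = I_n$, the set of circulant matrices is exactly the subalgebra $\mathbb{R}[P] \subseteq M_n(\mathbb{R})$, which is isomorphic to $\mathbb{R}[t]/(t^n - 1)$. This isomorphism immediately delivers three of the four claims: closures of circulant matrices under addition and multiplication follow because polynomials in $P$ add and multiply to polynomials in $P$, and the identity $XY = YX$ is the commutativity of this polynomial algebra.

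The remaining and slightly less trivial point is that the inverse of an invertible circulant matrix is again circulant. The plan here is a dimension count rather than an explicit formula. The space of circulant matrices has dimension $n$, since the first column determines the matrix uniquely and every first column occurs. Given an invertible circulant $X$, consider the linear map
\[
L_X \colon \mathbb{R}[P] \longrightarrow \mathbb{R}[P], \qquad Y \mapsto XY.
\]
The codomain lies in $\mathbb{R}[P]$ by the closure already proved, and $L_X$ is injective because $X$ is invertible in the ambient $M_n(\mathbb{R})$. Since the domain and codomain have the same finite dimension $n$, the map $L_X$ is surjective, so there exists a circulant matrix $Y$ with $XY = I_n$; this $Y$ is forced to equal $X^{-1}$.

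I expect no serious obstacle: once the presentation $C_x = \sum_k x_k P^k$ is in hand, everything reduces to properties of the commutative algebra $\mathbb{R}[t]/(t^n - 1)$. The only mildly subtle step is the invertibility claim, and the finite-dimensional injectivity-implies-surjectivity argument above avoids any need to compute the inverse via the discrete Fourier transform.
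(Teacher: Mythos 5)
Your argument is correct and complete. Note, however, that the paper does not prove this lemma at all: it is stated as ``well known'' with citations to Davis and to Gray, so there is no in-paper proof to compare against. The standard route in those references --- and the one implicitly suggested by the paper's own Lemma \ref{circulant spectrum}, which exhibits the common eigenvectors $\mathbf{v}_j$ of all circulants --- is simultaneous diagonalization by the discrete Fourier matrix $F$: writing every circulant as $F\Lambda F^{*}$ with $\Lambda$ diagonal makes closure under sums, products, and inverses, as well as commutativity, immediate. Your identification of the circulants with the commutative subalgebra $\mathbb{R}[P]\cong\mathbb{R}[t]/(t^{n}-1)$ generated by the cyclic shift $P$ achieves the same thing without leaving the real numbers, and your dimension-count for the inverse (injectivity of $Y\mapsto XY$ on an $n$-dimensional space forces surjectivity, so some circulant $Y$ satisfies $XY=I_n$ and hence $Y=X^{-1}$) is a clean substitute for computing $F\Lambda^{-1}F^{*}$. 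The one step worth spelling out if this were written up is that $I_n,P,\ldots,P^{n-1}$ are linearly independent (their supports are disjoint), so that $\mathbb{R}[P]$ really does have dimension exactly $n$ and coincides with the full set of circulant matrices; with that noted, every claim of the lemma follows as you describe.
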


\medskip
 
\begin{definition}
For a positive integer $n$, let $s_1, ..., s_k$ be integers such that 
\[
1 \leq s_{1} < s_{2} < \cdots < s_{k} \leq \frac{n}{2}.
\]
The \underline{circulant graph} $\operatorname{Circ}_n(s_1,s_2,...,s_k)$ is a graph with $n$ vertices 
labeled as $0,1,...,n-1$ such  that 
each vertex $i$ is adjacent to vertices 
$i \pm s_{j}$ (mod $n$) for all $1\leq j \leq k$.
\end{definition}

Note that a circulant graph $\operatorname{Circ}_n(s_1,s_2,...,s_k)$ is either $2k$-regular or $(2k-1)$-regular. It is $(2k-1)$-regular if and only if $n$ is even and $s_k=\frac{n}{2}$.

\begin{example} 
Here are some well-known examples of circulant graphs. 
See also Figures \ref{fig-3-antiprism}, \ref{fig-3-prism}, and \ref{fig-3-Moebius}.
\begin{enumerate}
\item The complete graph $K_n=\operatorname{Circ}_{n}(1,2,...,[n/2])$.

\item The complete bipartite graph $K_{n,n}=\operatorname{Circ}_{2n}(1,3,5,...,2[n/2]-1)$.

\item $n$-antiprism graph $\operatorname{Circ}_{2n}(1,2)$.

\item $n$-prism graph $Y_n=\operatorname{Circ}_{2n}(2,n)$ for odd number $n$.
        
\item The M{\"o}bius ladder graph $M_n=\operatorname{Circ}_{2n}(1,n)$.
\end{enumerate}
\end{example}

\begin{figure}    
 \[
 \begin{tikzpicture}[x=1.2cm, y=1.2cm]
	\vertex (1) at (0:1) [label=right:$1$]{};
    \vertex (2) at (300:1) [label=below:$2$]{};
	\vertex (3) at (240:1) [label=below:$3$]{};
	\vertex (4) at (180:1) [label=left:$4$]{};
	\vertex (5) at (120:1) [label=above:$5$]{};
	\vertex (0) at (60:1) [label=above:$0$]{};
	\path 
		(1) edge (2) 
        (2) edge (3)
        (3) edge (4)
        (4) edge (5)
        (5) edge (0)
        (0) edge (1)
        (1) edge (3)
        (2) edge (4)
        (3) edge (5)
        (4) edge (0)
        (5) edge (1)
        (0) edge (2)
	;
\end{tikzpicture}
\]
\caption{$3$-antiprism graph $\operatorname{Circ}_6(1,2)$}
\label{fig-3-antiprism}
\end{figure}

\begin{figure}    
 \[
 \begin{tikzpicture}[x=1.2cm, y=1.2cm]
	\vertex (1) at (0:1) [label=right:$1$]{};
    \vertex (2) at (300:1) [label=below:$2$]{};
	\vertex (3) at (240:1) [label=below:$3$]{};
	\vertex (4) at (180:1) [label=left:$4$]{};
	\vertex (5) at (120:1) [label=above:$5$]{};
	\vertex (0) at (60:1) [label=above:$0$]{};
	\path 
		(1) edge (3) 
        (2) edge (4)
        (3) edge (5)
        (4) edge (0)
        (5) edge (1)
        (0) edge (2)
        (1) edge (4)
        (2) edge (5)
        (3) edge (0)
	;
\end{tikzpicture}
\]
\caption{$3$-prism graph $Y_3=\operatorname{Circ}_6(1,2)$}
\label{fig-3-prism}
\end{figure}

\begin{figure}    
 \[
 \begin{tikzpicture}[x=1.2cm, y=1.2cm]
	\vertex (1) at (0:1) [label=right:$1$]{};
    \vertex (2) at (300:1) [label=below:$2$]{};
	\vertex (3) at (240:1) [label=below:$3$]{};
	\vertex (4) at (180:1) [label=left:$4$]{};
	\vertex (5) at (120:1) [label=above:$5$]{};
	\vertex (0) at (60:1) [label=above:$0$]{};
	\path 
		(1) edge (2) 
        (2) edge (3)
        (3) edge (4)
        (4) edge (5)
        (5) edge (0)
        (0) edge (1)
        (1) edge (4)
        (2) edge (5)
        (3) edge (0)
	;
\end{tikzpicture}
\]
\caption{M{\"o}bius ladder graph $M_3=\operatorname{Circ}_6(1,3)$}
\label{fig-3-Moebius}
\end{figure}

\begin{theorem}
Let $G$ be a $k'$-regular circulant graph $\operatorname{Circ}_n(s_1,s_2,...,s_k)$.
Then the eigenvectors of $L^{(2)}_{G}$ are 
\[ 
\mathbf{v}_{j}=(1, \omega^j, \omega^{2j},...,\omega^{(n-1)j})^T \quad \text{for $0 \leq j \leq n-1$}
\]
where $\omega=\exp(\frac{2\pi i}{n})$, and 
the corresponding eigenvalues are 
\[ 
\frac{1}{12} \left\{  \lambda_{j}^2 -16\lambda_j -(k')^2+16k' \right\}  
\]
where $\lambda_j$'s are the eigenvalues of 
the matrix $C_x$ given in Lemma \ref{circulant spectrum}. Thus,
\[ 
\lambda_{j}= 
\begin{cases}
    (-1)^j+\sum_{i=1}^{k-1}  2\cos{\frac{2\pi s_{i} j}{n}}   &\text{  if $n$ is even and $s_k=\frac{n}{2}$}\\
    \sum_{i=1}^{k} 2\cos{\frac{2\pi s_{i} j}{n}} &\text{  otherwise.}
\end{cases}     
\]  
\end{theorem}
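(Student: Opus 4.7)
The plan is to combine the polynomial formula for the $2$-Laplacian of a regular graph, established earlier in this section, with the spectral theory of circulant matrices recorded in Lemma \ref{circulant spectrum}.

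First I would observe that the adjacency matrix $A_G$ of $G=\operatorname{Circ}_n(s_1,\dots,s_k)$ is precisely the circulant matrix $C_x$ generated by the vector $x$ with $x_0=0$, $x_{s_i}=x_{n-s_i}=1$ for each $1\leq i\leq k$, and all other entries zero. Lemma \ref{circulant spectrum} then directly produces the $\mathbf{v}_j$ as eigenvectors with corresponding eigenvalues
\[
\lambda_j \;=\; \sum_{\ell=0}^{n-1} x_\ell\,\omega^{\ell j}.
\]
Pairing the contributions from $s_i$ and $n-s_i$ gives $\omega^{s_i j}+\omega^{-s_i j}=2\cos(2\pi s_i j/n)$, so the sum collapses to $\sum_{i=1}^{k}2\cos(2\pi s_i j/n)$, which is the ``otherwise'' expression. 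When $n$ is even and $s_k=n/2$, the indices $s_k$ and $n-s_k$ coincide, so position $n/2$ is a single $1$ contributing only $\omega^{(n/2)j}=(-1)^j$; the remaining indices contribute as before, producing the first case of the formula for $\lambda_j$.

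Next I would appeal to the earlier proposition on $k'$-regular graphs to write
\[
L^{(2)}_G \;=\; \frac{1}{12}\Bigl((16k'-(k')^2)I_n - 16\,A_G + A_G^2\Bigr).
\]
Since this displays $L^{(2)}_G$ as a polynomial in $A_G$, it preserves every eigenspace of $A_G$. Consequently the $\mathbf{v}_j$ remain eigenvectors of $L^{(2)}_G$, and the corresponding eigenvalues are obtained by substituting $\lambda_j$ into the polynomial, yielding precisely $\frac{1}{12}\bigl(\lambda_j^2 - 16\lambda_j + 16k' - (k')^2\bigr)$.

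The argument is essentially an assembly of two already-established ingredients, so I do not expect serious obstacles. The most delicate point is the bookkeeping in the case $n$ even with $s_k=n/2$: one must verify that vertex $i$ is adjacent to $i+n/2\equiv i-n/2\pmod n$ only once, so the generating vector $x$ receives a single $1$ at position $n/2$ rather than a pair of $1$'s at symmetric positions. This accounts for the $(-1)^j$ term and is what distinguishes the two cases in the final expression for $\lambda_j$.
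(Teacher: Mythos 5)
Your proposal is correct and follows essentially the same route as the paper: identify $A_G$ with the circulant matrix $C_x$, express $L^{(2)}_G$ as the quadratic polynomial $\frac{1}{12}\bigl((16k'-(k')^2)I_n-16A_G+A_G^2\bigr)$ in $A_G$, and apply Lemma \ref{circulant spectrum} to read off the common eigenvectors and the eigenvalues by substitution. Your explicit bookkeeping for the case $n$ even with $s_k=n/2$ (a single $1$ at position $n/2$ giving the $(-1)^j$ term) is a correct elaboration of a step the paper states without detail.
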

\begin{proof}
Note that the adjacency matrix of $G$ is a circulant matrix $C_{x}$,  where 
\[ 
x_{j}= \begin{cases}
1 & \text{if $j=\pm s_{\ell}$ (mod $n$) for some $\ell$} \\
0  &\text{otherwise.}
\end{cases}
\]
Since $G$ is $k'$-regular, we have
\begin{align*}
    A_{G'}&=\frac{1}{12}\left( 16A_{G}-A_{G}^2+D_{G} \right)\\
    &=\frac{1}{12}\left( 16C_{x}-C_{x}^2+k' I_{n} \right),
\end{align*}
and thus
\begin{align*}
    L^{(2)}_G &= L_{G'} \\
    &= \frac{1}{12}\left\{ 16(k'I_{n}-C_x)-(k')^2 I_n +C_x^2 \right\}\\
    &= \frac{1}{12}\left\{ (-(k')^2+16k')I_n -16C_x +C_x^2 \right\}.
\end{align*}
Note that $C_x^2$ is also a circulant matrix. 
Therefore, by Lemma \ref{circulant spectrum}, the eigenvectors of $L^{(2)}_{G}$ are 
\[ 
\mathbf{v}_{j}=(1, \omega^j, \omega^{2j},...,\omega^{(n-1)j})^T \quad \text{for $0 \leq j \leq n-1$}
\]
where $\omega=\exp(\frac{2\pi i}{n})$ and 
the corresponding eigenvalues are 
\[ 
\frac{1}{12} \left\{  \lambda_{j}^2 -16\lambda_j -(k')^2+16k' \right\}  
\]
where $\lambda_j$'s are the eigenvalues of $C_x$.
\end{proof}

\medskip

\begin{corollary} \label{2-lap spec of complete graph}
The eigenvectors of $L^{(2)}_{K_{n}}$ are 
\[ 
\mathbf{v}_{j}=(1, \omega^j, \omega^{2j},...,\omega^{(n-1)j})^T \quad \text{for $0 \leq j \leq n-1$}
\]
where $\omega=\exp(\frac{2\pi i}{n})$.
The corresponding eigenvalues are 
\[ 
\lambda_{j}= 
\begin{cases}
0 &\text{  if  } j=0 \\
\frac{1}{12}n(18-n)  &\text{  otherwise.}
\end{cases}
\]     
\end{corollary}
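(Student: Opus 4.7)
The plan is to deduce this corollary as a direct application of the preceding theorem, since $K_n$ is the circulant graph $\operatorname{Circ}_n(1,2,\ldots,[n/2])$ and hence $k'$-regular with $k'=n-1$. The eigenvectors $\mathbf{v}_j$ are therefore already given by the theorem, so only the eigenvalue formula needs verification.

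The key step is to identify the eigenvalues $\lambda_j$ of the adjacency matrix $A_{K_n}=C_x$ that appear in the previous theorem's formula $\frac{1}{12}\{\lambda_j^2-16\lambda_j-(k')^2+16k'\}$. I would bypass the cosine-sum formula from the theorem and simply note that $A_{K_n}=J_n-I_n$, whose spectrum consists of $n-1$ (with eigenvector $\mathbf{v}_0=\mathbf{1}$) and $-1$ (with multiplicity $n-1$, on the orthogonal complement spanned by the remaining $\mathbf{v}_j$). One can also read this off directly from the trigonometric sum: for $j=0$, every cosine is $1$, giving $n-1$; for $j\neq 0$, the sum $\sum_{s=1}^{n-1}\omega^{sj}=-1$ collapses the Fourier sum to $-1$ (treating the $n$ even, $s_k=n/2$ case by absorbing the $(-1)^j$ term).

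Substituting $k'=n-1$ and $\lambda_0=n-1$ yields $(n-1)^2-16(n-1)-(n-1)^2+16(n-1)=0$, giving $\lambda_0=0$. Substituting $\lambda_j=-1$ for $j\geq 1$ yields
\[
1+16-(n-1)^2+16(n-1) \;=\; 17-n^2+2n-1+16n-16 \;=\; n(18-n),
\]
so each such eigenvalue equals $\frac{1}{12}n(18-n)$, as claimed.

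There is no real obstacle here; the entire corollary is a substitution exercise once the spectrum of $A_{K_n}$ is identified. The only minor care point is the bookkeeping between the odd-$n$ and even-$n$ branches of the theorem's formula, but both branches give the same adjacency eigenvalues, so the final answer is uniform in $n$.
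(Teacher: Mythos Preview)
Your proposal is correct and is precisely the derivation the paper intends: the corollary is stated immediately after the circulant-graph theorem with no separate proof, so applying that theorem to $K_n=\operatorname{Circ}_n(1,2,\ldots,[n/2])$ with $k'=n-1$ and adjacency eigenvalues $n-1,-1,\ldots,-1$ is exactly the implicit argument. Your substitution and arithmetic are right, and your remark that the two parity branches of the theorem's formula yield the same adjacency spectrum handles the only bookkeeping issue.
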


From Corollary \ref{2-lap spec of complete graph}, 
we see that the multiplicity of zero eigenvalues of the $2$-Laplacian matrix does not count  
the number of connected components---the 
complete graph $K_{18}$ is connected, but 
the multiplicity of zero eigenvalues is $18$.
Also, Corollary \ref{2-lap spec of complete graph} gives 
a more accurate version of Corollary \ref{positive semidefinity} for complete graphs.

\begin{corollary} \label{complete graph n=18}
The $2$-Laplacian $L^{(2)}_{K_{n}}$ of $K_n$ is positive 
semi-definite if and only if $1 \leq n \leq 18$.
\end{corollary}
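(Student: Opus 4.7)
The plan is to reduce the corollary directly to the eigenvalue formula just obtained in Corollary \ref{2-lap spec of complete graph}. Since $L^{(2)}_{K_n}$ is a real symmetric matrix (as is any graph Laplacian, including those of weighted signed graphs), positive semi-definiteness is equivalent to every eigenvalue being non-negative. So the task reduces to a one-variable inequality in $n$.

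First I would record the spectrum from Corollary \ref{2-lap spec of complete graph}: the only eigenvalues of $L^{(2)}_{K_n}$ are $0$ (with multiplicity $1$) and $\tfrac{1}{12} n (18-n)$ (with multiplicity $n-1$). Hence $L^{(2)}_{K_n} \succeq 0$ if and only if
\[
\tfrac{1}{12} n (18-n) \geq 0.
\]
Since $n \geq 1$, the factor $n$ is positive, so this inequality is equivalent to $18 - n \geq 0$, i.e., $n \leq 18$. Conversely, if $n \geq 19$ then $\tfrac{1}{12} n(18-n) < 0$, producing a negative eigenvalue of multiplicity $n-1 \geq 18$, so $L^{(2)}_{K_n}$ fails to be positive semi-definite.

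There is essentially no obstacle here beyond invoking the previous corollary; the only thing worth double-checking is the edge case $n = 18$, where the nonzero eigenvalue collapses to $0$, so $L^{(2)}_{K_{18}}$ is positive semi-definite with the zero eigenvalue of full multiplicity $n = 18$ (matching the remark immediately preceding the corollary that the multiplicity of zero need not count connected components). This sharpens Corollary \ref{positive semidefinity}: whereas that corollary guarantees positive semi-definiteness for all $k$-regular graphs with $k \leq 8$, the complete graph $K_n$ is $(n-1)$-regular and remains positive semi-definite up to $n = 18$, i.e., up to regularity $17$, so Corollary \ref{positive semidefinity} is far from tight for this family.
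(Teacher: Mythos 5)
Your proof is correct and follows exactly the route the paper intends: the corollary is stated as an immediate consequence of Corollary \ref{2-lap spec of complete graph}, and reading off the sign of the eigenvalue $\frac{1}{12}n(18-n)$ is precisely the whole argument. Your observations about the $n=18$ edge case and the comparison with Corollary \ref{positive semidefinity} are consistent with the remarks the paper itself makes around this corollary.
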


\medskip

Next, seeing the second smallest eigenvalue of the ordinary 
Laplacian plays an important roles in spectral graph theory, we compare it with the second smallest eigenvalue of 
the positive semi-definite $2$-Laplacian. 
Let us show a technical lemma first.

\begin{lemma}\label{courant}
Suppose that $x_{1}+x_{2}+\cdots+x_{n}=0$ and $x_{1}^2+x_{2}^2+\cdots+x_{n}^2=1$. Then,  
\[
\sum_{1 \leq i<j\leq n} (x_{i}-x_{j})^2=n.
\]
\end{lemma}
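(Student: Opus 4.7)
The plan is to expand the square $(x_i - x_j)^2 = x_i^2 - 2 x_i x_j + x_j^2$ and evaluate the resulting two pieces separately using the two given normalizations. First I would handle the quadratic diagonal part: each index $i$ appears in exactly $n-1$ of the ordered pairs $\{i,j\}$ with $i<j$ or $i>j$ (paired with every other index once), so
\[
\sum_{1 \le i < j \le n} (x_i^2 + x_j^2) = (n-1) \sum_{i=1}^{n} x_i^2 = n-1,
\]
where the last equality uses the hypothesis $\sum_i x_i^2 = 1$.

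Next I would handle the cross-term using the standard identity
\[
\left( \sum_{i=1}^{n} x_i \right)^2 = \sum_{i=1}^{n} x_i^2 + 2 \sum_{1 \le i < j \le n} x_i x_j.
\]
The hypothesis $\sum_i x_i = 0$ forces the left-hand side to vanish, and $\sum_i x_i^2 = 1$ then gives $\sum_{i<j} x_i x_j = -\tfrac{1}{2}$, so $-2 \sum_{i<j} x_i x_j = 1$.

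Adding the two contributions yields $(n-1) + 1 = n$, as required. There is no real obstacle here — the lemma is a one-line algebraic identity once the square is expanded, and the two normalization conditions are used exactly once each. The only thing to be careful about is the combinatorial count that each index appears $n-1$ times in the unordered pairs, which is where the ``$n$'' in the final answer comes from.
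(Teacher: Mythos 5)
Your proof is correct, and every step checks out: the count that each index occurs in exactly $n-1$ of the unordered pairs gives $\sum_{1\le i<j\le n}(x_i^2+x_j^2)=(n-1)\sum_i x_i^2=n-1$, and the identity $\bigl(\sum_i x_i\bigr)^2=\sum_i x_i^2+2\sum_{i<j}x_ix_j$ together with the two hypotheses gives $-2\sum_{i<j}x_ix_j=1$, for a total of $n$. In substance this is the same elementary identity the paper uses, namely $\sum_{1\le i<j\le n}(x_i-x_j)^2=n\sum_i x_i^2-\bigl(\sum_i x_i\bigr)^2$, but your derivation is organized differently: you expand all the squares at once and dispose of the cross term with the square-of-the-sum identity, whereas the paper peels off the last coordinate, rewrites $\sum_{i\le n-1}(x_i-x_n)^2$ using $x_1+\cdots+x_{n-1}=-x_n$, and then recombines. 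Your version is the more transparent of the two --- it uses each hypothesis exactly once, in a clearly identified place --- while the paper's chain of equalities is harder to follow and leaves more of the intermediate algebra implicit. There is nothing to fix.
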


\begin{proof}
    \begin{align*}
        \sum_{1 \leq i<j\leq n} (x_{i}-x_{j})^2 &=\sum_{1 \leq i<j\leq n-1} (x_{i}-x_{j})^2 + \sum_{1 \leq i\leq n-1} (x_{i}-x_{n})^2\\
        &=\sum_{1 \leq i<j\leq n-1} (x_{i}-x_{j})^2  + (n+1)x_{n}^2+x_{1}^2+x_{2}^2+\cdots + x_{n-1}^2 \\
        &=2n\cdot (x_{1}^2+x_{2}^2+\cdots + x_{n-1}^2 + \sum_{1 \leq i<j\leq n-1}{x_{i}x_{j}})\\
        &=n\cdot \{x_{1}^2+x_{2}^2+\cdots + x_{n-1}^2 +(x_{1}+x_{2}+\cdots + x_{n-1})^2 \}\\
        &=n\cdot (x_{1}^2+x_{2}^2+\cdots + x_{n}^2 )\\
        &=n.
\end{align*}
\end{proof}

\begin{theorem}\label{thm:second-smallest-L2}
Let $G$ be a simple graph with $n$ vertices whose $2$-Laplacian $L^{(2)}_G$ is positive semi-definite. 
If $\lambda_{2}$ and $\mu_2$ are the second smallest eigenvalues of $L^{(2)}_G$ and $L_G$ respectively, then 
\[
\frac{4}{3}\mu_{2}-\frac{n(n-2)}{6} \; 
\leq 
\;
\lambda_{2}\;\leq \;\frac{4}{3}\mu_{2}.
\]
\end{theorem}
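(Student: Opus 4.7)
The plan is to compare the quadratic forms $x^T L^{(2)}_G x$ and $x^T L_G x$ directly and then invoke the Courant--Fischer min-max characterization of the second smallest eigenvalue. Since $L^{(2)}_G = L_{G'}$ is the Laplacian of a weighted graph (Definition \ref{m-lap}), the all-ones vector $\mathbf{1}$ always lies in its kernel; together with the hypothesis that $L^{(2)}_G$ is positive semi-definite, this makes $0$ the smallest eigenvalue, so
\[
\lambda_2 = \min_{\substack{\|x\|=1\\ x \perp \mathbf{1}}} x^T L^{(2)}_G x \qquad\text{and}\qquad \mu_2 = \min_{\substack{\|x\|=1\\ x \perp \mathbf{1}}} x^T L_G x.
\]

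The crux of the argument is to establish, for every $x \in \mathbb{R}^n$, the identity
\[
x^T L^{(2)}_G x = \frac{4}{3}\, x^T L_G x - \frac{1}{12}\sum_{1 \le i < j \le n}(A_G^2)_{ij}(x_i - x_j)^2.
\]
This will follow from the standard edge-sum formula $y^T L_H y = \sum_{i<j}(A_H)_{ij}(y_i-y_j)^2$ applied to $H = G'$, using the expression $A_{G'} = \tfrac{1}{12}(16 A_G - A_G^2 + D_G)$ of Remark \ref{2-lap}. The diagonal matrix $D_G$ contributes nothing to off-diagonal edge weights; in fact it precisely cancels the diagonal of $A_G^2$ so that $A_{G'}$ has zero diagonal.

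The upper bound is then immediate. Since $(A_G^2)_{ij} \ge 0$, the identity gives $x^T L^{(2)}_G x \le \tfrac{4}{3} x^T L_G x$ for every $x$. Applying this with $x$ a unit eigenvector of $L_G$ for $\mu_2$, which is automatically orthogonal to $\mathbf{1}$, the variational formula for $\lambda_2$ yields $\lambda_2 \le \tfrac{4}{3}\mu_2$.

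For the lower bound I will use Lemma \ref{courant}. Since a walk of length two from $v_i$ to $v_j$ must pass through an intermediate vertex distinct from both endpoints, one has $(A_G^2)_{ij} \le n-2$ for $i \ne j$; combined with Lemma \ref{courant}, for any unit $x$ orthogonal to $\mathbf{1}$,
\[
\sum_{1 \le i < j \le n}(A_G^2)_{ij}(x_i-x_j)^2 \;\le\; (n-2)\sum_{i<j}(x_i-x_j)^2 \;=\; n(n-2).
\]
Substituting into the key identity and minimizing over admissible $x$ gives $\lambda_2 \ge \tfrac{4}{3}\mu_2 - \tfrac{n(n-2)}{12}$, which is in fact stronger than the stated bound $\tfrac{4}{3}\mu_2 - \tfrac{n(n-2)}{6}$. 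The only real obstacle is the bookkeeping that yields the quadratic-form identity in the correct form; once that is in hand, both inequalities fall out of the variational principle.
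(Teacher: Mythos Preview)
Your argument is correct and mirrors the paper's proof exactly: both express $x^T L^{(2)}_G x$ via the edge-sum formula as $\tfrac{4}{3}\,x^T L_G x - \tfrac{1}{12}\sum_{i<j}(A_G^2)_{ij}(x_i-x_j)^2$, bound the second term using $0 \le (A_G^2)_{ij} \le n-2$ together with Lemma~\ref{courant}, and then invoke Courant--Fischer. Your observation that the argument actually yields the sharper constant $\tfrac{n(n-2)}{12}$ rather than $\tfrac{n(n-2)}{6}$ is correct; the paper's own computation gives the same stronger bound but records the weaker one in the theorem statement.
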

\begin{proof} 
Note that by the Courant-Fischer theorem,
\[
\lambda_{2}=\min\{x^{T}L^{(2)}_G x : x \perp (1,...,1)^T \text{ and } x^{T}x=1\}.
\]

For any $n$-vector $x$ satisfying $x \perp (1,...,1)^T$ and $x^{T}x=1$, since
\begin{align*}
x^{T}L^{(2)}_G x &=a_{1,2}\sum_{1 \leq i<j\leq n} (A_G)_{ij}(x_{i}-x_{j})^2+a_{2,2}\sum_{1 \leq i<j\leq n} (A_G^2)_{ij}(x_{i}-x_{j})^2\\
        &=\frac{4}{3}\sum_{1 \leq i<j\leq n} (A_G)_{ij}(x_{i}-x_{j})^2-\frac{1}{12}\sum_{1 \leq i<j\leq n} (A_G^2)_{ij}(x_{i}-x_{j})^2
\end{align*}
and $0 \leq (A_G^2)_{ij} \leq n-2$ for each $i$ and $j$, 
\[
\frac{4}{3}\sum_{i\sim j}(x_{i}-x_{j})^2-\frac{n-2}{12}\sum_{1 \leq i<j\leq n} (x_{i}-x_{j})^2 
\; \leq \; 
x^{T}L^{(2)}_G x 
\; \leq \;
\frac{4}{3}\sum_{i\sim j}(x_{i}-x_{j})^2.
\]
Here $i\sim j$ means $i$ and $j$ are adjacent with $i < j$.
 Using
     \begin{align*}
         \mu_{2}&=\min\{x^{T}L_G x : x \perp (1,...,1) \text{ and } x^{T}x=1\}\\
         &=\min\{\sum_{i\sim j}(x_{i}-x_{j})^2 : x \perp (1,...,1) \text{ and } x^{T}x=1\}
     \end{align*}
    and Lemma \ref{courant}, we have
    \[\frac{4}{3}\mu_{2}-\frac{n(n-2)}{6} \;\leq \; \lambda_{2} \; \leq \; \frac{4}{3}\mu_{2}.\]
\end{proof}


\bigskip

\bigskip


\begin{thebibliography}{10}

\bibitem{Belardo signless}
BELARDO, Francesco; OLIVEIRA, Elismar R.; TREVISAN, Vilmar. 
\newblock Limit points of (signless) Laplacian spectral radii of linear trees. \newblock {\em Applied Mathematics and Computation, 2024, 477: 128819.}

\bibitem{BC}
BROUWER, Andries E.; SPENCE, Edward. 
\newblock Cospectral graphs on 12 vertices. \newblock {\em the electronic journal of combinatorics, 2009, N20-N20.}


\bibitem{B const of cospectral}
BUTLER, Steve; GROUT, Jason. 
\newblock A construction of cospectral graphs for the normalized Laplacian. \newblock {\em the electronic journal of combinatorics, 2011, 18.P231: 1.}

\bibitem{Chen signed}
CHEN, Wei, et al. 
\newblock On spectral properties of signed Laplacians with connections to eventual positivity. \newblock {\em IEEE Transactions on Automatic Control, 2020, 66.5: 2177-2190.}

\bibitem{Chen signless}
CHEN, Wenwen; WANG, Bing; ZHAI, Mingqing. 
\newblock Signless Laplacian spectral radius of graphs without short cycles or long cycles.\newblock {\em Linear Algebra and its Applications, 2022, 645: 123-136.}


\bibitem{Cui signless}
CUI, Shu-Yu; TIAN, Gui-Xian.
\newblock The spectra and the signless Laplacian spectra of graphs with pockets. \newblock {\em Applied Mathematics and Computation, 2017, 315: 363-371.}


\bibitem{C spectra}
CVETKOVIC, Dragos M.; DOOB, Michael; SACHS, Horst. \newblock Spectra of graphs. \newblock {\em Theory and application. 1980.}

\bibitem{C Signless}
CVETKOVIĆ, Dragoš; ROWLINSON, Peter; SIMIĆ, Slobodan K. 
\newblock Signless Laplacians of finite graphs. \newblock {\em Linear Algebra and its applications, 2007, 423.1: 155-171.}

\bibitem{davis1979circulant}
DAVIS, P. J.
\newblock  Circulant Matrices.
\newblock {\em Monographs and textbooks in pure and applied mathematics. Wiley, 1979.}

\bibitem{Fiedler}
FIEDLER, Miroslav. 
\newblock Algebraic connectivity of graphs. \newblock {\em Czechoslovak mathematical journal, 1973, 23.2: 298-305.}


\bibitem{Gho signless}
GHODRATI, Amir Hossein; HOSSEINZADEH, Mohammad Ali.
\newblock Signless Laplacian spectrum of a graph. \newblock {\em Linear Algebra and its Applications, 2024, 682: 257-267.}

\bibitem{gray2006toeplitz}
GRAY, Robert M., et al. 
\newblock Toeplitz and circulant matrices: A review. 
\newblock {\em Foundations and Trends in Communications and Information Theory, 2006, 2.3: 155-239.}


\bibitem{H cospectral}
HAEMERS, Willem H.; SPENCE, Edward. 
\newblock Enumeration of cospectral graphs. \newblock {\em European Journal of Combinatorics, 2004, 25.2: 199-211.}


\bibitem{Kun signed}
KUNEGIS, Jérôme, et al. 
\newblock Spectral analysis of signed graphs for clustering, prediction and visualization. \newblock {\em In: Proceedings of the 2010 SIAM international conference on data mining. Society for Industrial and Applied Mathematics, 2010. p. 559-570.}


\bibitem{Lin signless}
LIN, Hongying, et al. 
\newblock Sharp bounds for ordinary and signless Laplacian spectral radii of uniform hypergraphs. \newblock {\em Applied Mathematics and Computation, 2016, 285: 217-227.}

\bibitem{Ni signed}
NISHIKAWA, Takashi; MOTTER, Adilson E. Network synchronization landscape reveals compensatory structures, quantization, and the positive effect of negative interactions. Proceedings of the National Academy of Sciences, 2010, 107.23: 10342-10347.


\bibitem{Oboudi signless}
OBOUDI, Mohammad Reza.
\newblock On the smallest signless Laplacian eigenvalue of graphs. \newblock {\em Linear Algebra and Its Applications, 2022, 637: 138-156.}

\bibitem{sagemath}
SageMath, the Sage Mathematics Software System (Version 9.3) The Sage Developers, 2021, https://www.sagemath.org.


\bibitem{Song signed}
SONG, Yue; HILL, David J.; LIU, Tao. 
\newblock Network-based analysis of small-disturbance angle stability of power systems. \newblock {\em IEEE Transactions on Control of Network Systems, 2017, 5.3: 901-912.}


\bibitem{VW}
VAN DAM, Edwin R.; HAEMERS, Willem H. 
\newblock Which graphs are determined by their spectrum?. \newblock {\em Linear Algebra and its applications, 2003, 373: 241-272.}


\bibitem{Zelazo signed}
ZELAZO, Daniel; BÜRGER, Mathias. 
\newblock On the robustness of uncertain consensus networks. \newblock {\em IEEE Transactions on Control of Network Systems, 2015, 4.2: 170-178.}


\bibitem{Zhao signless}
ZHAO, Yanhua; HUANG, Xueyi; GUO, Hangtian. The signless Laplacian spectral radius of graphs with no intersecting triangles. Linear Algebra and its Applications, 2021, 618: 12-21.  


\end{thebibliography}
\end{document}